\newtheorem{thm}{Theorem}[section]
\newtheorem{lem}[thm]{Lemma}
\newtheorem{prop}[thm]{Proposition}
\newtheorem{rem}[thm]{Remark}
\newcommand{\R}{\mathbb R}
\newcommand{\N}{\mathbb N}
\begin{document}
	
	\title[Local behaviour of the second order derivatives ]{Local behaviour of the second order derivatives of  solutions to $p$-Laplace equations}
	
	\author{Felice Iandoli, Domenico Vuono}

	\email[Felice Iandoli]{felice.iandoli@unical.it}
	\email[Domenico Vuono]{domenico.vuono@unical.it}
	\address{Dipartimento di Matematica e Informatica, Università della Calabria,
		Ponte Pietro Bucci 31B, 87036 Arcavacata di Rende, Cosenza, Italy}
	

	\keywords{$p$-Laplacian, regularity, Calderòn-Zygmund estimates, Moser's iteration}
	
	\subjclass[2020]{35J60, 35B65}

	\begin{abstract}
		We consider the equation $- \Delta_{p} u = f(x)$ in $\Omega,$ where $\Delta_{p}$ is the $p$-Laplace operator. We provide $L^{\infty}$-type estimates for the second derivatives of  solutions  when $p$ approaches to $2$.
	\end{abstract}

 	\maketitle
  
	\section{Introduction}
	
	This paper is concerned  with the regularity of the second order derivatives  of weak solutions to:
	\begin{equation} \label{eq:problema}
		- \Delta_{p} u = f(x)  \quad \text{in } \Omega\,
	\end{equation}
	where $\Delta_{p} u := - \operatorname{div} (|\nabla u|^{p-2} \nabla u)$, for $p>1$, is the $p$-Laplace operator and $\Omega $ is a domain of $\R^{n}$ with $n \ge 2$. 
	For $u\in W^{1,p}_{loc}(\Omega)$, the weak formulation of \eqref{eq:problema} is the following
	\begin{equation}\label{soluzionedebole}
		\int_\Omega |\nabla u|^{p-2}(\nabla u,\nabla \psi)\,dx=\int_\Omega f \psi \,dx \quad \forall \, \psi\in C^{\infty}_c(\Omega).
	\end{equation}
    
	Denote by $B_R(x_0)$ the open ball of radius $R$ centered at $x_0$.
	The main result is the following.

    \begin{thm}\label{thm:main}
	    Let $\Omega$ be an open set of $\R^n$, $n\geq 2$, $x_0\in \Omega$ and $B_{2R}(x_0)\subset\subset \Omega$. Let $u$ be a weak solution of~\eqref{eq:problema}. Assume that $f\in  W^{2,l}_{loc}(\Omega)$, with $l>n/2$. For $k>0$ fixed, there exists  $\mathfrak{C}:=\mathfrak{C}(k,l,n)>0$ small enough such that if $|p-2|<\mathfrak{C}$, then  
     $$\||\nabla u|^k D^2u\|_{L^\infty(B_R(x_0))}\le \mathcal{C},$$
     where $\mathcal{C}=\mathcal{C}(k,n,l,R,p,\| \nabla u\|_{L^{\infty}(B_{2R}(x_o))},\|f\|_{W^{2,l}( B_{2R}(x_o))})$ is a positive constant.

\end{thm}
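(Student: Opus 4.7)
The plan is to combine a linearization of the equation, a weighted Caccioppoli-type estimate, and a Moser iteration scheme, exploiting the condition $|p-2|<\mathfrak{C}$ as a perturbative smallness around the Laplace case $p=2$. As a preliminary step, I would regularize by considering $u_\e$ solving $-\mathrm{div}((\e^2+|\nabla u_\e|^2)^{(p-2)/2}\nabla u_\e)=f$, which admits $C^{2,\alpha}_{loc}$ solutions; all subsequent estimates will be derived uniformly in $\e$, and the limit $\e\to 0$ will recover the claim for the original solution $u$.

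Differentiating the regularized equation in direction $e_i$ produces a linearized divergence-form equation for $w_i=\partial_i u_\e$,
\begin{equation*}
-\partial_j(a^\e_{jk}\partial_k w_i)=\partial_i f,
\end{equation*}
where $a^\e_{jk}=(\e^2+|\nabla u_\e|^2)^{(p-2)/2}\bigl[\delta_{jk}+(p-2)(\e^2+|\nabla u_\e|^2)^{-1}\partial_j u_\e\,\partial_k u_\e\bigr]$ is uniformly elliptic with ratio $\max(1,p-1)/\min(1,p-1)$ close to $1$ when $|p-2|$ is small. Testing this against $\psi=\eta^{2m}w_i(\e^2+|\nabla u_\e|^2)^\beta$, with a smooth cutoff $\eta$ and an exponent $\beta=\beta(k,q)$ tuned to produce the desired weight, summing over $i$, and integrating by parts (twice in the $f$-term, using $f\in W^{2,l}_{loc}$) yields a weighted Caccioppoli inequality whose left-hand side controls $\int \eta^{2m}(\e^2+|\nabla u_\e|^2)^{kq}|D^2u_\e|^2\,dx$ in terms of quantities involving $\nabla u_\e$, $\nabla f$, and $D^2 f$ on a slightly larger ball.

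I would then combine this inequality with Sobolev's inequality applied to the auxiliary function $v=(\e^2+|\nabla u_\e|^2)^{(kq+p/2-1)/2}\eta^m$ to obtain a reverse-H\"older-type estimate on a pair of nested balls, and iterate in the Moser manner by letting $q\to\infty$ along a geometric sequence of exponents with shrinking radii. The resulting $L^\infty$ bound, being independent of $\e$, survives the limit $\e\to 0$ and yields $\||\nabla u|^kD^2u\|_{L^\infty_{loc}(\Omega)}\le\mathcal{C}$, with the dependence of $\mathcal{C}$ exactly as stated.

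The main obstacle is the Caccioppoli step: the non-constant coefficient structure and the $(p-2)$-dependent weight in the test function generate error terms of the form $(p-2)\int(\e^2+|\nabla u_\e|^2)^{\beta-1}|\nabla u_\e|^2|D^2u_\e|^2\eta^{2m}\,dx$, which must be absorbed into the good elliptic term using precisely the smallness of $|p-2|$; tracking how the absorption constants degrade with $q$ is delicate and is what fixes the threshold $\mathfrak{C}=\mathfrak{C}(k,l,n)$. Meanwhile, the hypothesis $l>n/2$ enters at the Moser iteration step: after using the Sobolev embedding $W^{2,l}\hookrightarrow L^q$, the terms produced by $\nabla f$ and $D^2 f$ give contributions that are summable along the geometric iteration exactly under this classical critical-exponent threshold for scalar elliptic forcing terms.
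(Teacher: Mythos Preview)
Your proposal has a genuine structural gap. The auxiliary function you iterate, $v=(\e^2+|\nabla u_\e|^2)^{(kq+p/2-1)/2}\eta^m$, is a function of $\nabla u_\e$ alone; Moser iteration on $v$ can only upgrade integrability of $|\nabla u_\e|$, which is already in $L^\infty_{loc}$ by $C^{1,\beta}$ regularity. It cannot produce an $L^\infty$ bound on $|\nabla u|^k D^2 u$. The quantity that must be iterated is $g_\e=(\e+|\nabla u_\e|^2)^{k/2}|u_{\e,ij}|$, and controlling $\|\nabla(g_\e^q)\|_{L^2}$ requires \emph{third} derivatives of $u_\e$. For this reason the paper differentiates the equation \emph{twice} and tests the second linearized equation with $\varphi=(\e+|\nabla u_\e|^2)^{(2kq+2-p)/2}|u_{\e,ij}|^{2q-2}u_{\e,ij}\psi^2$; your first linearization with test function $\eta^{2m}w_i(\e^2+|\nabla u_\e|^2)^\beta$ simply does not access the right object.

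You also misplace the role of the smallness condition $|p-2|<\mathfrak{C}$. In the paper the Caccioppoli-type estimate (Lemma~\ref{Lemmalinearizzatodue}) holds for all $p>1$: the $(p-2)$-terms are absorbed via Young's inequality with an auxiliary parameter, producing constants that grow polynomially in $q$ but do not require $p$ close to $2$. The restriction on $p$ enters at a different, and essential, point that is missing from your outline: the Caccioppoli inequality for $g_\e^q$ leaves residual terms containing $|D^2 u_\e|^4$ and $|D^2 u_\e|^{2\hat s}$ with exponents \emph{independent of $q$}. These are handled by H\"older's inequality, splitting off a factor $\|D^2 u_\e\|_{L^{\hat q}}$ with a large but fixed $\hat q=\hat q(k,l,n)$, and this norm is bounded uniformly in $\e$ by the Calder\'on--Zygmund result of Theorem~\ref{teoremacalderon}, which is precisely where $|p-2|<\mathfrak{C}$ is needed. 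The iteration inequality then takes the non-standard form $\|g_\e\|_{L^{2^* q}(B_{h'})}^q\le \mathcal{C}\,q(h-h')^{-1}\|g_\e\|_{L^{\nu(q-1/k)}(B_h)}^{q-1/k}$ with $\nu<2^*$, and the shift $q\mapsto q-1/k$ on the right reflects exactly the $|D^2u_\e|$-factors that were peeled off using the a~priori $W^{2,\hat q}$ bound. Without this ingredient the iteration does not close.
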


\begin{rem}
The following example shows that the presence of the weight $|\nabla u|^k$ is necessary on order to establish Theorem \ref{thm:main}.
Consider  
$u(x_1,\ldots,x_n):=|x_1|^{p'}/p'$,  $p'=\frac{p-1}{p}$. This function satisfies $\Delta_p u=1$ in $\mathbb{R}^n$, for any $n\geq 1$. 
One can note that $\partial_{x_1x_1}^2 u=C_p|x_1|^{\frac{2-p}{p-1}},$ which of course is not in $L^{\infty}(\Omega)$, for any $\Omega$   open set such that $\Omega\cap \{x_1=0\}\neq \emptyset$ and for any $p>2$.
On the other hand 
$|\nabla u|^kD^2 u$ is locally in  $L^{\infty}$ if $p\le 2+k$. Note that in such an example, when $k$ goes to zero, we obtain $p\leq 2$. We do not know the exact relationship between $p$ and $k$ in general, as it depends on the behavior of the constant in the Calderón-Zygmund inequality (see Remark \ref{rmk:cappuccio}), which is not explicit.

\end{rem}

\begin{rem}\label{rmk:cappuccio} The range of $p$ for which our theorem holds true depends upon the Calderón-Zygmund constant, more precisely  it is sufficient that
     \begin{equation*}
         2-\frac{1}{C(n,\hat q)}< p <\min \left\{2+\frac{1}{\hat q-1} ,2+\frac{1}{C(n,\hat q)}\right\},
     \end{equation*}
     with $C(n,\hat q)$ given by \eqref{eq:CZ},  and  
     \begin{equation}\label{cappuccio1}\hat{q}:=\hat{q}(k,l,n)=\max\left\{\left(\frac1k+1\right)\frac{2\nu}{\nu-2},\frac{2-k}{k}\frac{l\nu}{l(\nu-2)-\nu}\right\},\end{equation}
     where $\frac{2l}{l-1}<\nu<2^*$ if $n\geq3$ and  $\frac{2l}{l-1}<\nu<r$ for some fixed $r>\frac{2l}{l-1}$ if $n=2$.
\end{rem}

\begin{rem}\label{holdero}
    Note that $f \in W^{2,l}_{loc}(\Omega)$, with $l>n/2$, implies $f\in C^{0,\beta'}_{loc}(\Omega)$, for some $\beta'\in(0,1)$, see \cite[Theorem 12.55]{leoni}.
   \end{rem}

\noindent Let us now discuss the state of the art of the regularity results concerning the equation \eqref{eq:problema}. 
\noindent In general, solutions of \eqref{eq:problema} are not classical. It is well known (see \cite{DB,GT,LIB,T}) that for every $p>1$ and under suitable assumptions on the source term,  solutions of \eqref{eq:problema} are of class $C^{1,\beta}_{loc}(\Omega)\cap C^2(\Omega\setminus Z_u)$, where $Z_u$ denotes the set where the gradient vanishes.

Second order estimates for 
$p$-Laplace equations have been extensively investigated in the past decades, leading to significant advancements in the understanding of regularity properties. 

\noindent In \cite{Cma}, under the minimal assumption $f\in L^2$, the $W^{1,2}$ regularity of the stress field $|\nabla u|^{p-2}\nabla u$ was established. Moreover, for $1<p<2$ the authors established that 
$u\in W^{2,2}$ under stronger assumptions on the source term. Similar regularity results for the stress field were obtained in \cite{antonuovo,Lou}. 

\noindent For $1<p<3$, the $W^{2,2}$ regularity of solutions was further developed in \cite{DS,MMS} for $f\in W^{1,1}\cap L^q$, with $q>n$. Additionally, in these works, by imposing a sign condition on the source term, it was shown that for $p\ge 3$, the solutions  belong to $W^{2,q}$, with $q<(p-1)/(p-2)$ (this is an optimal result). In particular, in \cite{MMS}, the $W^{1,2}$ regularity was investigated for terms of the form $|\nabla u|^{\alpha-1}\nabla u$. More general results can be found in \cite{cellina, HS, LU, S1, S2}.

\noindent The anisotropic counterpart of these results, which involves significant technical challenges, has been explored in \cite{Anto1, Anto2, BMV, CRS}. Additionally, we highlight recent advances in regularity theory for the vectorial case, presented in \cite{BaCiDiMa, Cmavett, Minchievic, MMSV}.

\noindent For $p$ close to 2, more general regularity estimates, in the spirit of a natural Calderón–Zygmund theory, were obtained in \cite{beni, MRS}. In particular, in \cite{beni}, new estimates on the third derivatives of the solution have been recently introduced. We also mention an $L^{\infty}$ estimate for second derivatives in some very specific cases, such as 
$p$-harmonic functions in the planar setting $n=2$ \cite{IM}. In this framework, a general theory has only been developed in \cite{AKM,2,4,245,FFM,DM,DM2,DM3,Dong,12,13,KuuMin,16}. In addition to their intrinsic interest, these estimates play a key role in the analysis of qualitative and quantitative properties of p-Laplace equations, including classification results and stability, see for instance \cite{CCR}.

\noindent In this paper, exploiting Serrin's technique \cite{serrin} (see also \cite{DEG,MO}), we establish
$L^{\infty}$-type estimates for the second derivatives of the solution. To the best of our knowledge, this is the first result in this direction. 
    
\textbf{Strategy of the proof.}
In the sequel, we will use the notation
	$u_i := u_{x_i}$ , for $i = 1, . . . , n$, to indicate the partial derivative of $u$ with respect to $x_i$.  The second derivatives of $u$ will be
	denoted with $u_{ij}:=(\partial ^2 u)/(\partial ^2 x_ix_j),$ $i,j = 1,...,n$. We denote  by $\nabla u$ and by $D^2u$ respectively the gradient vector and the Hessian matrix of $u$.

We explain the proof when $n\geq 3$, being the case $n=2$ similar.	Let $x_0\in \Omega$ and  $B_{2R}:=B_{2R}(x_0)\subset\subset\Omega$. First of all we consider the regularized problem
	\begin{equation} \label{eq:problregol}
		\begin{cases}
			-\operatorname{div}\left( (\varepsilon +|\nabla u_{\varepsilon}|^{2})^{\frac{p-2}{2}} \, \nabla u_{\varepsilon} \right) = f(x) & \text{in } B_{2R}(x_0) \\
			u_{\varepsilon} = u & \text{on } \partial B_{2R}(x_0),
		\end{cases}
	\end{equation}
	where $\varepsilon \in (0,1)$. We assume for the moment $f\in C^2(B_{2R})$, this hypothesis will be removed by means of a density argument.
    
    The existence of  a weak solution of \eqref{eq:problregol} follows by a classical minimization procedure. Notice that by standard regularity results \cite{DiKaSc,GT},  the solution $ u_\varepsilon$ is regular.
	In \cite{DiKaSc,DB,L2} (see also \cite[Section 5]{Anto2}) it has been proven that for any compact set $K\subset \subset B_{2R}(x_0)$ the sequence ${u}_\varepsilon$ is uniformly bounded in $C^{1,\beta '}(K)$, for some $0<\beta'<1$. Moreover, it follows that
		${u}_\varepsilon\rightarrow  u$ in the norm $\|\cdot\|_{C^{1,\beta}(K)},$ for all $0\le \beta<\beta '$.

        With all these ingredients (see the last proof of the paper) it is enough to prove  that the function $g_{\varepsilon}(x):=(\varepsilon+|\nabla u_\varepsilon|^2)^{\frac{k}{2}}|u_{\varepsilon,i j}|$ is locally uniformly bounded in $\varepsilon$ and then pass to the limit for $\varepsilon\rightarrow 0$. This estimate is the core of the paper and it is obtained in Proposition \ref{prop:moser}. In order to get such an estimate we use a Moser's iteration process \cite{MO}. The starting point for this iteration process is to prove that 
	\begin{align}\label{ancora}
    \begin{split}
    \|g_{\varepsilon}\|_{L^{2^*q}(B_{h'})}^q\leq \mathcal{C}\frac{q}{h-h'}\|g_{\varepsilon}\|_{L^{\nu(q-\hat{s})}(B_h)}^{q-\hat{s}}, \quad \hat{s}:=\frac1k,
    \end{split}
\end{align}
for any $q\geq 1/k$ and any $0<h'<h<R$, and 
where $\mathcal{C}$ is independent of $\varepsilon$, $B_h$ and $B_{h'}$ are the balls centered at $x_0$ of radii respectively $h$ and $h'$,
 and $\nu<2^*$. With such an estimate one can start the Moser's iteration: with the choice in \eqref{definizionediq_n} we build the sequences $p_{i}\rightarrow\infty$ and $h_i\rightarrow R'$, for any $R'<R$, for $i\rightarrow\infty$ in such a way that, by applying $i$-times \eqref{ancora}, we obtain 
$ \|g_{\varepsilon}\|_{L^{p_i}(B_{h_i})}\leq \mathcal{C},$ for a constant $\mathcal{C}$ independent of $i$ and $\varepsilon$. Passing to the limit in $i$ we obtain the desired bound on the $L^{\infty}$-norm of $g_{\varepsilon}$ on $B_{R'}$. \\
The inequality \eqref{ancora} is crucial for this paper and it needs several ingredients. In
 the  preparation Lemma \ref{Lemmalinearizzatodue}, we study the second linearized of the equation \eqref{soluzionedebole} and, by choosing a suitable test function, we obtain the $L^2$-estimates for $\nabla g_{\varepsilon}^q$, which, by means of Sobolev embedding, is bounded from below by the l.h.s. of \eqref{ancora}. The estimate in \eqref{stimalinearizzatosecondo} in Lemma \ref{Lemmalinearizzatodue} depends on some powers of the second derivatives of the solution $u_{\varepsilon}$. To handle these powers of $u_{\varepsilon,ij}$ we make a repeated use, among  other things, of the result in \cite{beni} which provides uniform estimates for the $L^q$ norms of $u_{\varepsilon,ij}$ (see Remark \ref{regolarizziamo}). This is the content of the Prop. \ref{dinodino}.  After that, we conclude the proof of  Theorem \ref{thm:main} passing to the limit in $\varepsilon$ and removing the $C^2$ assumption on the forcing term $f$.

\textbf{Organization.} The paper is organized in two sections. In the first one we recall some  results that are used in the proof. In the second section we give the proof of Theorem \ref{thm:main}.
	
	\section{Notations and Preliminary results}\label{notations}
We say that $\Omega$ is a domain if it is open and connected.
We recall  the Calder\'on-Zygmund inequality  (see e.g. \cite[Corollary 9.10]{GT}):
	
	\begin{lem}\label{Zygmund}
    Let $\Omega$ be a domain of $\R^n$, $n\geq 2$, and let $w \in W^{2,q}_{0}(\Omega)$. Then there exists a positive constant $C(n,q)$ such that 
		\begin{equation} \label{eq:CZ}
			\|D^{2}w\|_{L^{q}(\Omega)} \le C(n,q) \|\Delta w\|_{L^{q}(\Omega)}.
		\end{equation}
	\end{lem}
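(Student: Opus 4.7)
The plan is to reduce the estimate to a convolution statement on all of $\R^n$ and then apply the classical theory of Calder\'on--Zygmund singular integrals. First, since $w\in W^{2,q}_0(\Omega)$ is by definition the $W^{2,q}$-limit of functions in $C^{\infty}_c(\Omega)$, extension by zero produces a compactly supported $\widetilde w\in W^{2,q}(\R^n)$ with $\Delta\widetilde w$ equal to the zero extension of $\Delta w$. Denoting by $\Gamma$ the Newtonian potential (fundamental solution of $-\Delta$), one has the representation $\widetilde w = -\Gamma\ast \Delta \widetilde w$, so differentiating twice yields the pointwise identity
\begin{equation*}
\partial_{ij}\widetilde w(x) = \mathrm{p.v.}\!\int_{\R^n} K_{ij}(x-y)\,\Delta\widetilde w(y)\di y + c_{ij}\,\Delta\widetilde w(x),
\end{equation*}
where $K_{ij}:=\partial_{ij}\Gamma$ is smooth away from the origin, positively homogeneous of degree $-n$, and $c_{ij}$ is a constant produced by the boundary term on small spheres. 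This identity is first justified for test functions by excising a ball $B_{\e}(x)$, integrating by parts on its complement, and letting $\e\to 0$; density then extends it to all of $W^{2,q}_0$.

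Second, I verify that $K_{ij}$ is a genuine Calder\'on--Zygmund kernel: the pointwise bound $|K_{ij}(x)|\le C|x|^{-n}$, the spherical cancellation $\int_{|x|=r}K_{ij}\,d\sigma=0$, and the H\"ormander smoothness condition are all explicit from the formula for second derivatives of $\Gamma$. The corresponding Fourier multiplier is the bounded function $\xi_i\xi_j/|\xi|^2$, which gives the $L^2$ bound at once via Plancherel's theorem.

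Third, I apply the standard Calder\'on--Zygmund theorem: decomposition of an $L^1$ function into good and bad parts yields the weak-type $(1,1)$ estimate for the principal-value operator, Marcinkiewicz interpolation between weak $(1,1)$ and strong $(2,2)$ gives strong $(q,q)$ for $1<q\le 2$, and duality extends this to all $1<q<\infty$. Combining these bounds with the representation above produces $\|D^2\widetilde w\|_{L^q(\R^n)}\le C(n,q)\|\Delta\widetilde w\|_{L^q(\R^n)}$, which restricted to $\Omega$ is exactly \eqref{eq:CZ}.

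The main technical obstacle is the derivation of the pointwise representation with the precise constant $c_{ij}$: controlling the boundary contribution as the excised ball shrinks requires the spherical mean-value property of $K_{ij}$ together with a careful limiting argument, and this is the step where one really uses the cancellation. Everything downstream is then packaged by the singular-integral machinery, and the passage from $C^\infty_c(\Omega)$ to $W^{2,q}_0(\Omega)$ is by a routine density argument using that both sides of \eqref{eq:CZ} are continuous in the $W^{2,q}$ norm.
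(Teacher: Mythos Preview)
Your outline is correct and follows the classical route (Newtonian potential representation plus the Calder\'on--Zygmund theory of singular integrals), which is precisely the argument in the reference the paper cites. Note, however, that the paper does not actually give a proof of this lemma: it is stated as a known preliminary result with a reference to \cite{GT}, so there is no ``paper's own proof'' to compare against. Your sketch is essentially the proof one finds in Chapter~9 of Gilbarg--Trudinger.
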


    The following theorem is very important for our purposes. 

\begin{thm}[\cite{beni,MRS}]\label{teoremacalderon}
		Let $\Omega$ be a domain of $\R^{n}$, $n\geq 2$, and $u\in C^{1,\beta}_{loc}(\Omega)$ be a weak solution to \eqref{eq:problema}, with $f(x)\in W^{1,1}_{loc}(\Omega)\cap C_{loc}^{0,\beta'}(\Omega).$ Let $q \ge 2$ and $p$ be such that $$2-\frac{1}{C(n,q)}<p < \min\left\{2+\frac{1}{q-1},2+\frac{1}{C(n,q)}\right\},$$ with $C(n,q)$ given by \eqref{eq:CZ}. Then we have $u \in W^{2,q}_{loc}(\Omega).$
		
\end{thm}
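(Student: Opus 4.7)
The plan is to obtain $L^q_{loc}$ bounds on the Hessian that are uniform along a nondegenerate regularization, and then pass to the limit. Fix $B_{2R}\subset\subset\Omega$ and let $u_\varepsilon\in C^\infty(B_{2R})$ solve
\begin{equation*}
-\operatorname{div}\bigl((\varepsilon+|\nabla u_\varepsilon|^2)^{(p-2)/2}\nabla u_\varepsilon\bigr)=f \quad\text{in }B_{2R}, \qquad u_\varepsilon=u\ \text{on }\partial B_{2R}.
\end{equation*}
Standard theory yields $u_\varepsilon\to u$ in $C^{1,\beta}_{loc}(B_{2R})$; in particular $\|u_\varepsilon\|_{W^{1,\infty}(B_R)}$ is controlled uniformly in $\varepsilon$.

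Expanding the divergence gives the pointwise identity
\begin{equation*}
\Delta u_\varepsilon=-\,f\,(\varepsilon+|\nabla u_\varepsilon|^2)^{(2-p)/2}-(p-2)\,\frac{\langle D^2 u_\varepsilon\,\nabla u_\varepsilon,\nabla u_\varepsilon\rangle}{\varepsilon+|\nabla u_\varepsilon|^2},
\end{equation*}
whence $|\Delta u_\varepsilon|\le|f|(\varepsilon+|\nabla u_\varepsilon|^2)^{(2-p)/2}+|p-2|\,|D^2 u_\varepsilon|$ pointwise. Pick $\eta\in C_c^\infty(B_{2R})$ with $\eta\equiv 1$ on $B_R$, and apply Lemma \ref{Zygmund} to $\eta u_\varepsilon\in W^{2,q}_0(B_{2R})$. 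Expanding $\Delta(\eta u_\varepsilon)$ and $D^2(\eta u_\varepsilon)$ by the product rule and using the pointwise bound above, one arrives at
\begin{equation*}
\|D^2 u_\varepsilon\|_{L^q(B_R)}\le C(n,q)|p-2|\,\|D^2 u_\varepsilon\|_{L^q(B_{2R})}+C(n,q)\bigl\|f\,(\varepsilon+|\nabla u_\varepsilon|^2)^{(2-p)/2}\bigr\|_{L^q(B_{2R})}+\mathcal{L}_\varepsilon,
\end{equation*}
where $\mathcal{L}_\varepsilon$ collects lower order contributions bounded uniformly in $\varepsilon$ through $\|\eta\|_{C^2}$ and $\|u_\varepsilon\|_{W^{1,\infty}}$. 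The range $2-1/C(n,q)<p<2+1/C(n,q)$ forces $C(n,q)|p-2|<1$; iterating the same estimate over a shrinking chain of concentric balls (or invoking a Gehring-type self-improving lemma) absorbs the $D^2 u_\varepsilon$ term on the right into the left-hand side.

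The main technical point will be the uniform-in-$\varepsilon$ control of $\|f(\varepsilon+|\nabla u_\varepsilon|^2)^{(2-p)/2}\|_{L^q}$. For $p\le 2$ the exponent is nonnegative and the weight is bounded by $(1+\|\nabla u_\varepsilon\|_\infty^2)^{(2-p)/2}$, so the estimate follows from $f\in L^q_{loc}$. For $p>2$ the weight is singular on $\{\nabla u=0\}$; here the sharp condition $p<2+1/(q-1)$, equivalent to $(p-2)q<q/(q-1)$, is precisely the integrability threshold dictated by the model solution $|x_1|^{p/(p-1)}$. One invokes the $C^{1,\beta}_{loc}$ regularity of $u$ and the Hölder continuity of $f$ to secure, via standard lower bounds on $|\nabla u_\varepsilon|$ near its critical set for $p$-Laplace type equations, the uniform estimate $\int(\varepsilon+|\nabla u_\varepsilon|^2)^{-(p-2)q/2}\,dx\le C$ on compact subsets of $\Omega$. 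Granted this, weak compactness in $W^{2,q}_{loc}$ together with the $C^{1,\beta}_{loc}$ convergence $u_\varepsilon\to u$ yields $u\in W^{2,q}_{loc}(\Omega)$.
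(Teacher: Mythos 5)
The paper does not prove this statement: it is quoted verbatim from \cite{beni,MRS}, so the relevant comparison is with the proof in \cite{MRS}. Your skeleton matches that proof: regularize, expand the divergence to get the pointwise identity for $\Delta u_\varepsilon$, apply Lemma \ref{Zygmund} to $\eta u_\varepsilon$, and absorb $C(n,q)|p-2|\,\|\eta D^2u_\varepsilon\|_{L^q}$ into the left-hand side using $C(n,q)|p-2|<1$ (this absorption is legitimate here because the regularized solution is smooth, so the quantity being absorbed is finite a priori; no Gehring-type lemma is needed if you keep the cutoff inside the Hessian term).

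The genuine gap is exactly at the step you flag as ``the main technical point.'' The estimate you invoke for $p>2$, namely $\int(\varepsilon+|\nabla u_\varepsilon|^2)^{-(p-2)q/2}\,dx\le C$ uniformly on compacts, is false in general: there are no ``standard lower bounds on $|\nabla u_\varepsilon|$ near its critical set'' (the gradient of a $p$-harmonic or $p$-Laplace solution can vanish on large sets, and for $f\equiv 0$, $u\equiv 0$ the integral is $\varepsilon^{-(p-2)q/2}|B|\to\infty$). What is true, and what \cite{MRS} actually proves as a separate lemma, is the \emph{weighted} bound
\begin{equation*}
\int_{B_R}\frac{|f|^{q}}{(\varepsilon+|\nabla u_\varepsilon|^2)^{\frac{(p-2)q}{2}}}\,dx\le C
\end{equation*}
uniformly in $\varepsilon$; the factor $|f|^{q}$ is essential because it is the equation itself that couples the degeneracy of $\nabla u_\varepsilon$ to the size of $f$. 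This lemma is obtained in the spirit of Damascelli--Sciunzi by testing the regularized equation with a function built from $f|f|^{q-2}$ times a negative power of $(\varepsilon+|\nabla u_\varepsilon|^2)$, and it is precisely here that the hypotheses $f\in W^{1,1}_{loc}$ and $q\ge 2$, together with the restriction $p<2+\tfrac{1}{q-1}$, enter. Your proposal never uses $f\in W^{1,1}_{loc}$ or $q\ge2$, which is the telltale sign that the key lemma is missing; as written, the argument for $p>2$ does not close. (Your heuristic that $p<2+\tfrac1{q-1}$ is the threshold seen on the model solution $|x_1|^{p/(p-1)}$ is correct, but it is a consistency check, not a proof.)
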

	We shall make repeated use  of the following remark.
\begin{rem}\label{regolarizziamo}
    Let $u,q$ and $p$ as in Theorem \ref{teoremacalderon} and let $u_\varepsilon$ be a solution of \eqref{eq:problregol}. Recall Remark \ref{holdero} and the fact that  the source term is H\"older continuous.
    In order to establish our results we shall also exploit the fact that for any ball $B_{2R}(x_0) \subset\subset \Omega$ there exists positive constant $\mathcal{C}$ depending on $n$, $q$, $R$, $p$, $ \| \nabla u\|_{L^{\infty}(B_{2R}(x_0))}$,$\|f\|_{C^{0,\beta'}(B_{2R}(x_0))},\|f\|_{W^{1,1}(B_{2R}(x_0))}$ such that 
\begin{equation}\label{stimaderivatasecondaregolarizzate}
	\|D^{2}u_{\varepsilon}\|_{L^{q}(B_{R}(x_0))}\le \mathcal{C},
		\end{equation}
   This is a consequence of Theorem \ref{teoremacalderon}, see \cite[Proof of Theorem 1.3]{beni}.
\end{rem}

	\section{Proof of the main result}\label{risultatiprincipali} 
We introduce, for $i,j=1,\ldots, n$, the notation 
\begin{equation}\label{g}
g_{\varepsilon}:=(\varepsilon+|\nabla u_\varepsilon|^2)^{\frac{k}{2}}|u_{\varepsilon,i j}|.
\end{equation}
We begin with a preliminary lemma in which we  estimate from above the $L^2$-norm of the gradient of the function $g_{\varepsilon}^{q}$ localized in some fixed ball $B_{2R}$. In this lemma we shall study the second linearized of the weak formulation of the regularized problem \eqref{eq:problregol}, i.e.
		\begin{equation}\label{equazione debole3}
			\int_{B_{2R}} (\varepsilon+|\nabla u_\varepsilon|^2)^{\frac{p-2}{2}}(\nabla u_\varepsilon,\nabla \varphi)\,dx=\int_{B_{2R}} f \varphi \,dx \quad \forall \varphi\in C^{\infty}_c(B_{2R}).
		\end{equation}

\begin{lem}\label{Lemmalinearizzatodue}
Let $\Omega$ be a domain in $\R^n$, $n\ge 2$, and $B_{2R}\subset\subset \Omega$. Let $u_\varepsilon$ be a weak solution of the regularized problem \eqref{eq:problregol}, with $p>1$. We assume that $f\in C^2(\Omega)$. Then, for any  $k \in \R$, $q\ge 1$  we have

\begin{align}\label{stimalinearizzatosecondo}
    \begin{split}
    \|\nabla g_\varepsilon^q\psi\|_{L^2(B_{2R})}^2
       &\le   q^2C(p,k)\int_{B_{2R}} (\varepsilon+|\nabla u_\varepsilon|^2)^{kq-1} | u_{\varepsilon,ij}|^{2q-2}|D^2u_\varepsilon|^4 \psi^2\,dx 
        \\& + q C(p) \int_{B_{2R}} (\varepsilon+|\nabla u_\varepsilon|^2)^{kq} | u_{\varepsilon,ij}|^{2q}|\nabla \psi|^2\,dx \\&+ qC(p)\int_{B_{2R}}|f_{ij}|(\varepsilon+|\nabla u_\varepsilon|^2)^{\frac{2kq+2-p}{2}}|u_{\varepsilon,ij}|^{2q-1}\psi^2\,dx,
    \end{split}
\end{align}
for any nonnegative smooth function $\psi$ compactly supported in $B_{2R}$ and where $C(p,k)$ and $C(p)$ are positive constants.

\end{lem}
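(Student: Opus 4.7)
The plan is to split $|\nabla g_\varepsilon^q|^2$ into a direct ``algebraic'' piece and a third-derivative piece, and then control the latter by testing the second-linearized equation with a carefully weighted multiple of $u_{\varepsilon,ij}$. A direct computation of $\nabla g_\varepsilon^q$ with $g_\varepsilon^q=(\varepsilon+|\nabla u_\varepsilon|^2)^{kq/2}|u_{\varepsilon,ij}|^q$ gives, after squaring and using $|u_{\varepsilon,m}\nabla u_{\varepsilon,m}|\le|\nabla u_\varepsilon||D^2u_\varepsilon|\le(\varepsilon+|\nabla u_\varepsilon|^2)^{1/2}|D^2u_\varepsilon|$,
\[
|\nabla g_\varepsilon^q|^2\le 2k^2q^2(\varepsilon+|\nabla u_\varepsilon|^2)^{kq-1}|u_{\varepsilon,ij}|^{2q}|D^2u_\varepsilon|^2+2q^2(\varepsilon+|\nabla u_\varepsilon|^2)^{kq}|u_{\varepsilon,ij}|^{2q-2}|\nabla u_{\varepsilon,ij}|^2.
\]
Integrating against $\psi^2$, the first summand is immediately bounded by the first term on the right-hand side of \eqref{stimalinearizzatosecondo}, since $|u_{\varepsilon,ij}|^{2q}|D^2u_\varepsilon|^2\le|u_{\varepsilon,ij}|^{2q-2}|D^2u_\varepsilon|^4$. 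The whole task therefore reduces to estimating the third-derivative integral
\[
\mathcal I:=\int_{B_{2R}}(\varepsilon+|\nabla u_\varepsilon|^2)^{kq}|u_{\varepsilon,ij}|^{2q-2}|\nabla u_{\varepsilon,ij}|^2\psi^2\,dx
\]
by the three terms on the right-hand side of \eqref{stimalinearizzatosecondo}.

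To do this I would differentiate \eqref{equazione debole3} first in $x_i$ and then in $x_j$, which is legitimate because $\varepsilon>0$ and $f\in C^2$ make the regularized problem uniformly elliptic with smooth data, so $u_\varepsilon$ has at least three continuous derivatives on compact subsets. Setting
\[
a_{kl}^\varepsilon:=(\varepsilon+|\nabla u_\varepsilon|^2)^{\frac{p-2}{2}}\delta_{kl}+(p-2)(\varepsilon+|\nabla u_\varepsilon|^2)^{\frac{p-4}{2}}u_{\varepsilon,k}u_{\varepsilon,l},
\]
which enjoys the ellipticity bound $a_{kl}^\varepsilon\xi_k\xi_l\ge\min\{1,p-1\}(\varepsilon+|\nabla u_\varepsilon|^2)^{(p-2)/2}|\xi|^2$, the second-linearized identity reads
\[
\int_{B_{2R}} a_{kl}^\varepsilon u_{\varepsilon,ijl}\,\partial_k\Phi\,dx=\int_{B_{2R}} f_{ij}\Phi\,dx-\int_{B_{2R}} (\partial_j a_{kl}^\varepsilon)u_{\varepsilon,il}\,\partial_k\Phi\,dx
\]
for any $\Phi\in C_c^\infty(B_{2R})$. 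I would then plug in, for the fixed pair $i,j$ (no summation),
\[
\Phi:=u_{\varepsilon,ij}\bigl(u_{\varepsilon,ij}^2+\sigma\bigr)^{q-1}(\varepsilon+|\nabla u_\varepsilon|^2)^{kq-\frac{p-2}{2}}\psi^2,
\]
with a regularization parameter $\sigma\downarrow 0$ sent to zero at the end by dominated convergence. The exponent $kq-(p-2)/2$ is chosen so that, when the piece of $\partial_k\Phi$ proportional to $u_{\varepsilon,ijk}$ is paired with $a_{kl}^\varepsilon u_{\varepsilon,ijl}$, the ellipticity weight $(\varepsilon+|\nabla u_\varepsilon|^2)^{(p-2)/2}$ combines with it to produce precisely $(\varepsilon+|\nabla u_\varepsilon|^2)^{kq}$, yielding a positive multiple of $\mathcal I$ on the left.

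After rearranging so that $\mathcal I$ sits on the left, the remaining contributions become upper bounds for $\mathcal I$. The forcing integral $\int f_{ij}\Phi\,dx$ produces the third summand of \eqref{stimalinearizzatosecondo} via the identity $kq-(p-2)/2=(2kq+2-p)/2$. The piece of $\partial_k\Phi$ containing $\psi\,\partial_k\psi$, combined with a Young's inequality to reabsorb a small fraction of $\mathcal I$, yields the second summand. The two remaining contributions (the piece of $\partial_k\Phi$ from differentiating the weight $(\varepsilon+|\nabla u_\varepsilon|^2)^{kq-(p-2)/2}$ and the full term $\int(\partial_j a_{kl}^\varepsilon)u_{\varepsilon,il}\partial_k\Phi\,dx$) are both controlled by integrands of the form $qC(p)(\varepsilon+|\nabla u_\varepsilon|^2)^{kq-1/2}|u_{\varepsilon,ij}|^{2q-2}|D^2u_\varepsilon|^2|\nabla u_{\varepsilon,ij}|\psi^2$; applying Young's inequality with the split $A\cdot B$ where
\[
A:=(\varepsilon+|\nabla u_\varepsilon|^2)^{\frac{kq}{2}}|u_{\varepsilon,ij}|^{q-1}|\nabla u_{\varepsilon,ij}|\psi,\qquad B:=(\varepsilon+|\nabla u_\varepsilon|^2)^{\frac{kq-1}{2}}|u_{\varepsilon,ij}|^{q-1}|D^2u_\varepsilon|^2\psi,
\]
the $A^2$-piece is absorbed into $\mathcal I$ while $B^2$ is exactly the integrand of the first summand of \eqref{stimalinearizzatosecondo}, with the $q^2 C(p,k)$ prefactor arising from the product of the $(2q-1)$-factor coming from differentiating $|u_{\varepsilon,ij}|^{2q-1}$ inside $\Phi$ and the reciprocal of the Young parameter. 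The main obstacle is precisely this exponent bookkeeping: one must verify that $(p-3)/2+kq-(p-2)/2=kq-\tfrac12$ is exactly the arithmetic mean of the coercive exponent $kq$ and the residue exponent $kq-1$, which is what enables the clean Young splitting; the uniform ellipticity of $a_{kl}^\varepsilon$ (non-degenerate thanks to $\varepsilon>0$) then safely allows the absorption step.
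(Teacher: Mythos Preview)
Your proposal is correct and follows essentially the same route as the paper: the test function $\Phi$ (with weight exponent $kq-(p-2)/2=(2kq+2-p)/2$) coincides with the paper's choice $\varphi$ in \eqref{test1}, and your coercivity-plus-Young-absorption scheme is exactly what the paper carries out, only packaged compactly via the coefficients $a_{kl}^\varepsilon$ rather than the explicit enumeration $I_1,\ldots,I_{19}$. One small caveat: the ``full term'' $\int(\partial_j a_{kl}^\varepsilon)\,u_{\varepsilon,il}\,\partial_k\Phi$ also contains contributions from the weight-derivative and $\psi\,\partial_k\psi$ parts of $\partial_k\Phi$ that carry no $\nabla u_{\varepsilon,ij}$ factor and hence are not of the form you display (and the cross-term constant should read $qC(p,k)$ rather than $qC(p)$, since differentiating the weight produces $|2kq+2-p|$), but these pieces are harmless since they are bounded directly by the first and second summands on the right of \eqref{stimalinearizzatosecondo}.
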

\begin{proof}
   Let $u_{\varepsilon}$ be as in the statement. Since $f\in C^{2}(\Omega)$, by standard regularity results (see \cite{DB,GT}), we deduce that $u_\varepsilon\in C^3(B_{2R})$. Since 
    \begin{equation}\label{gradio}
\begin{aligned}
    \nabla\left((\varepsilon+|\nabla u_{\varepsilon}|^2)^{\frac{qk}{2}}|u_{\varepsilon,ij}|^{q}\right)
    &=q(\varepsilon+|\nabla u_{\varepsilon}|^2)^{\frac{qk}{2}}|u_{\varepsilon,ij}|^{q-2}u_{\varepsilon,ij}\nabla u_{\varepsilon,ij}\\
    &+qk(\varepsilon+|\nabla u_{\varepsilon}|^2)^{\frac{qk-2}{2}}|u_{\varepsilon,ij}|^{q}D^2u_{\varepsilon}\nabla u_{\varepsilon},
    \end{aligned}\end{equation}
    by means of the Young inequality it is sufficient to estimate from above the $L^2$-norms of the two summands on the r.h.s. above. Let us start with the first summand in the r.h.s. of \eqref{gradio}.
     We consider the second linearized equation of the regularized problem.
		We fix $i,j=1,...,n$ and we use $\varphi_{ij}:=(\partial ^2 \varphi)/(\partial ^2 x_ix_j)\in C^{\infty}_c(B_{2R})$ in \eqref{equazione debole3}, thanks to the regularity of $u_{\varepsilon}$ and $g_{\varepsilon}$, integrating twice by parts, we obtain
		
		\begin{align}
	 \label{eq:lin secregolarizzato}
  \begin{split}
			& \int_{B_{2R}} (\varepsilon+|\nabla u_\varepsilon|^2)^{\frac{p-2}{2}} (\nabla u_{\varepsilon,ij},\nabla \varphi)\,dx \\
		+& (p-2) \int_{B_{2R}} (\varepsilon+|\nabla u_\varepsilon|^2)^{\frac{p-4}{2}} (\nabla u_\varepsilon,\nabla u_{\varepsilon,j})(\nabla u_{\varepsilon,i}, \nabla \varphi)\,dx \\
			+& (p-2)(p-4) \int_{B_{2R}} (\varepsilon+|\nabla u_\varepsilon|^2)^{\frac{p-6}{2}} (\nabla u_\varepsilon,\nabla u_{\varepsilon,j})(\nabla u_{\varepsilon,i},\nabla u_\varepsilon)(\nabla u_\varepsilon,\nabla \varphi)\,dx \\
			+& (p-2) \int_{B_{2R}} (\varepsilon+|\nabla u_\varepsilon|^2)^{\frac{p-4}{2}} (\nabla u_{\varepsilon,ij},\nabla u_\varepsilon)(\nabla u_\varepsilon,\nabla \varphi)\,dx\\
			+& (p-2) \int_{B_{2R}} (\varepsilon+|\nabla u_\varepsilon|^2)^{\frac{p-4}{2}} (\nabla u_{\varepsilon,i},\nabla u_{\varepsilon,j})(\nabla u_\varepsilon,\nabla \varphi) \,dx\\
			+& (p-2) \int_{B_{2R}} (\varepsilon+|\nabla u_\varepsilon|^2)^{\frac{p-4}{2}} (\nabla u_{\varepsilon,i},\nabla u_\varepsilon)(\nabla u_{\varepsilon,j},\nabla \varphi)\,dx = \int_{B_{2R}}f_{ij}\varphi\,dx.\\
   \end{split}
		\end{align}

\

\
Consider  $\psi$ as in the statement. 
For fixed numbers $k\in \R$ and $q\ge 1$, we consider the following test function 
\begin{equation}\label{test1}
    \varphi (x):= (\varepsilon+|\nabla u_\varepsilon|^2)^{\frac{2kq+2-p}{2}}|u_{\varepsilon,ij}|^{2q-2}u_{\varepsilon,ij}\psi^2\in C_c^1(B_{2R}). 
\end{equation}

So, we get 
\begin{equation*}
\begin{split}
 &\nabla \varphi= (2q-1)(\varepsilon+|\nabla u_\varepsilon|^2)^{\frac{2kq+2-p}{2}}|u_{\varepsilon,ij}|^{2q-2}\nabla u_{\varepsilon,ij}\psi^2 
 \\ &+(2kq+2-p)(\varepsilon+|\nabla u_\varepsilon|^2)^{\frac{2kq-p}{2}}|u_{\varepsilon,ij}|^{2q-2}u_{\varepsilon,ij}\psi^2 D^2u_\varepsilon\nabla u_\varepsilon 
\\&+2(\varepsilon+|\nabla u_\varepsilon|^2)^{\frac{2kq+2-p}{2}}|u_{\varepsilon,ij}|^{2q-2}u_{\varepsilon,ij}\psi \nabla \psi:=J_1+J_2+J_3.
\end{split}
\end{equation*}

Plugging the function  $\varphi$ defined in \eqref{test1} in the equation \eqref{eq:lin secregolarizzato}, we obtain 

	\begin{align}
	 \label{eq:lin secregolarizzato2}
  \begin{split}
			0=& \int_{B_{2R}} (\varepsilon+|\nabla u_\varepsilon|^2)^{\frac{p-2}{2}} (\nabla u_{\varepsilon,ij},J_1+J_2+J_3) \,dx
   \\+& (p-2) \int_{B_{2R}} (\varepsilon+|\nabla u_\varepsilon|^2)^{\frac{p-4}{2}} (\nabla u_\varepsilon,\nabla u_{\varepsilon,j})(\nabla u_{\varepsilon,i},J_1+J_2+J_3) \,dx \\
			+& (p-2)(p-4) \int_{B_{2R}} (\varepsilon+|\nabla u_\varepsilon|^2)^{\frac{p-6}{2}} (\nabla u_\varepsilon,\nabla u_{\varepsilon,j})(\nabla u_{\varepsilon,i},\nabla u_\varepsilon)\times 
   \\& \quad\qquad\quad\qquad\quad\quad\times (\nabla u_\varepsilon,J_1+J_2+J_3)\,dx \\
			+& (p-2) \int_{B_{2R}} (\varepsilon+|\nabla u_\varepsilon|^2)^{\frac{p-4}{2}} (\nabla u_{\varepsilon,ij},\nabla u_\varepsilon)(\nabla u_\varepsilon,J_1+J_2+J_3)\,dx\\
			+& (p-2) \int_{B_{2R}} (\varepsilon+|\nabla u_\varepsilon|^2)^{\frac{p-4}{2}} (\nabla u_{\varepsilon,i},\nabla u_{\varepsilon,j})(\nabla u_\varepsilon,J_1+J_2+J_3)\,dx \\
			+& (p-2) \int_{B_{2R}} (\varepsilon+|\nabla u_\varepsilon|^2)^{\frac{p-4}{2}} (\nabla u_{\varepsilon,i},\nabla u_\varepsilon)(\nabla u_{\varepsilon,j},J_1+J_2+J_3)\,dx
            \\&- \int_{B_{2R}}f_{ij}(\varepsilon+|\nabla u_\varepsilon|^2)^{\frac{2kq+2-p}{2}}|u_{\varepsilon,ij}|^{2q-2}u_{\varepsilon,ij}\psi^2\,dx
            \\&=:I_1+\cdot\cdot\cdot +I_{18}-I_{19}.\\
   \end{split}
\end{align}

Let us estimate the term $I_1+I_{10}$.
For $p\ge 2$, we get 

\begin{align}\label{stima p piu grande di due}
    \begin{split}
        I_1+I_{10}&= (2q-1)\int_{B_{2R}} (\varepsilon+|\nabla u_\varepsilon|^2)^{kq} |u_{\varepsilon,ij}|^{2q-2}\psi ^2|\nabla u_{\varepsilon,ij}|^2\,dx\\&+ (2q-1)(p-2) \int_{B_{2R}} (\varepsilon+|\nabla u_\varepsilon|^2)^{kq-1} |u_{\varepsilon,ij}|^{2q-2}\psi ^2(\nabla u_{\varepsilon,ij},\nabla u_\varepsilon)^2\,dx
        \\& \ge (2q-1)\int_{B_{2R}} (\varepsilon+|\nabla u_\varepsilon|^2)^{kq} |u_{\varepsilon,ij}|^{2q-2}\psi ^2|\nabla u_{\varepsilon,ij}|^2\,dx.
    \end{split}
\end{align}

In the case $p<2$, using Cauchy-Schwarz
inequality we obtain 

\begin{equation}\label{stima p piu grande di due2}
    I_1+I_{10}\ge (2q-1)(p-1)\int_{B_{2R}} (\varepsilon+|\nabla u_\varepsilon|^2)^{kq} |u_{\varepsilon,ij}|^{2q-2}\psi ^2|\nabla u_{\varepsilon,ij}|^2\,dx.
\end{equation}

Using \eqref{stima p piu grande di due} and \eqref{stima p piu grande di due2} in \eqref{eq:lin secregolarizzato2} we have 

\begin{align}\label{primastima}
    \begin{split}
        &(2q-1)\min\{1,p-1\}\int_{B_{2R}} (\varepsilon+|\nabla u_\varepsilon|^2)^{kq} |u_{\varepsilon,ij}|^{2q-2}\psi ^2|\nabla u_{\varepsilon,ij}|^2\,dx\\& \le |I_2|+\cdot\cdot\cdot +|I_9|+|I_{11}|+\cdot\cdot\cdot +|I_{18}|+|I_{19}|.
    \end{split}
\end{align}

Now we estimate the right-hand side of \eqref{primastima}. First, we estimate the terms $|I_2|+|I_4|+|I_7|+|I_{11}|+|I_{13}|+|I_{16}|$.
By Cauchy-Schwarz inequality we obtain 

\begin{align}\label{stima1}
    \begin{split}
       &|I_2|+|I_4|+|I_7|+|I_{11}|+|I_{13}|+|I_{16}|=\\& |2kq+2-p| \Big|\int_{B_{2R}} (\varepsilon+|\nabla u_\varepsilon|^2)^{kq-1} |u_{\varepsilon,ij}|^{2q-2}u_{\varepsilon,ij}\psi ^2(\nabla u_{\varepsilon,ij},D^2u_\varepsilon\nabla u_\varepsilon)\,dx\Big| 
       \\&+ (2q-1)|p-2| \Big|\int_{B_{2R}} (\varepsilon+|\nabla u_\varepsilon|^2)^{kq-1} |u_{\varepsilon,ij}|^{2q-2}\psi ^2(\nabla u_\varepsilon,\nabla u_{\varepsilon,j})(\nabla u_{\varepsilon,ij},\nabla u_{\varepsilon,i})\,dx\Big|
      \\&+  (2q-1)|p-2||p-4| \Big|\int_{B_{2R}} (\varepsilon+|\nabla u_\varepsilon|^2)^{\frac{2kq-4}{2}} |u_{\varepsilon,ij}|^{2q-2}\psi ^2\times
      \\&  \qquad\qquad\qquad\qquad\qquad\qquad\qquad\times(\nabla u_{\varepsilon,j},\nabla u_\varepsilon)(\nabla u_{\varepsilon,i},\nabla u_\varepsilon)(\nabla u_{\varepsilon,ij},\nabla u_\varepsilon)\,dx \Big|
      \\&+ |2kq+2-p||p-2| \Big|\int_{B_{2R}} (\varepsilon+|\nabla u_\varepsilon|^2)^{\frac{2kq-4}{2}} |u_{\varepsilon,ij}|^{2q-2}u_{\varepsilon,ij}\psi ^2\times\\&\qquad\qquad\qquad\qquad\qquad\qquad\qquad\qquad\times(\nabla u_{\varepsilon,ij},\nabla u_\varepsilon)(\nabla u_\varepsilon,D^2u_\varepsilon\nabla u_\varepsilon)\,dx\Big|
      \\&+ (2q-1)|p-2| \Big|\int_{B_{2R}} (\varepsilon+|\nabla u_\varepsilon|^2)^{kq-1} |u_{\varepsilon,ij}|^{2q-2}\psi ^2(\nabla u_{\varepsilon,i},\nabla u_{\varepsilon,j})(\nabla u_{\varepsilon,ij},\nabla u_\varepsilon)\,dx\Big|
      \\&+ (2q-1)|p-2| \Big|\int_{B_{2R}} (\varepsilon+|\nabla u_\varepsilon|^2)^{kq-1} |u_{\varepsilon,ij}|^{2q-2}\psi ^2(\nabla u_{\varepsilon,i},\nabla u_{\varepsilon})(\nabla u_{\varepsilon,ij},\nabla u_{\varepsilon,j})\,dx\Big|
      \\& \le  qC(p,k) \int_{B_{2R}} (\varepsilon+|\nabla u_\varepsilon|^2)^{\frac{2kq-1}{2}} |u_{\varepsilon,ij}|^{2q-2}\psi ^2|\nabla u_{\varepsilon,ij}||D^2u_\varepsilon|^2\,dx,
    \end{split}
\end{align}

where $C(p,k)$ is a positive constant.\\ 

We estimate the terms $|I_3|+|I_{12}|$. Using Cauchy-Schwarz inequality we have 
\begin{equation}\label{stima2}
\begin{split}
    &|I_3|+|I_{12}| = 2 \Big|\int_{B_{2R}} (\varepsilon+|\nabla u_\varepsilon|^2)^{kq}|u_{\varepsilon,ij}|^{2q-2}u_{\varepsilon,ij}\psi (\nabla u_{\varepsilon,ij},\nabla \psi) \,dx\Big|
    \\& +2 |p-2|\Big|\int_{B_{2R}} (\varepsilon+|\nabla u_\varepsilon|^2)^{kq-1}|u_{\varepsilon,ij}|^{2q-2}u_{\varepsilon,ij}\psi  (\nabla u_{\varepsilon,ij},\nabla u_\varepsilon)(\nabla u_\varepsilon,\nabla \psi) \,dx\Big|
    \\& \le  C(p) \int_{B_{2R}} (\varepsilon+|\nabla u_\varepsilon|^2)^{kq}|u_{\varepsilon,ij}|^{2q-1}\psi  |\nabla u_{\varepsilon,ij}||\nabla \psi|\,dx,
    \end{split}
\end{equation}
where $C(p)$ is a positive constant.
In a similar way we obtain

\begin{align}\label{stima3}
    \begin{split}
      &|I_5|+|I_8|+|I_{14}|+|I_{17}|=
      \\&+ |2kq+2-p||p-2| \Big|\int_{B_{2R}} (\varepsilon+|\nabla u_\varepsilon|^2)^{\frac{2kq-4}{2}} |u_{\varepsilon,ij}|^{2q-2}u_{\varepsilon,ij}\psi ^2\times\\&\qquad\qquad\qquad\qquad\qquad\qquad\qquad\qquad\times(\nabla u_{\varepsilon,j},\nabla u_\varepsilon)(\nabla u_{\varepsilon,i},D^2u_\varepsilon\nabla u_\varepsilon)\,dx\Big|
      \\&+ |2kq+2-p||p-2||p-4| \Big| \int_{B_{2R}} (\varepsilon+|\nabla u_\varepsilon|^2)^{\frac{2kq-6}{2}} |u_{\varepsilon,ij}|^{2q-2} \times \\& \qquad\qquad\qquad\qquad\qquad\qquad  \times u_{\varepsilon,ij}\psi ^2(\nabla u_{\varepsilon,j},\nabla u_\varepsilon)(\nabla u_{\varepsilon,i},\nabla u_\varepsilon)(\nabla u_\varepsilon,D^2u_\varepsilon\nabla u_\varepsilon)\,dx\Big|
       \\&+ |2kq+2-p||p-2|\Big| \int_{B_{2R}} (\varepsilon+|\nabla u_\varepsilon|^2)^{\frac{2kq-4}{2}} |u_{\varepsilon,ij}|^{2q-2}u_{\varepsilon,ij}\psi ^2\times\\&\qquad\qquad\qquad\qquad\qquad\qquad\qquad\qquad\times(\nabla u_{\varepsilon,j},\nabla u_{\varepsilon,i})(\nabla u_\varepsilon,D^2u_\varepsilon\nabla u_\varepsilon)\,dx\Big|
        \\&+ |2kq+2-p||p-2| \Big|\int_{B_{2R}} (\varepsilon+|\nabla u_\varepsilon|^2)^{\frac{2kq-4}{2}} |u_{\varepsilon,ij}|^{2q-2}u_{\varepsilon,ij}\psi ^2\times\\&\qquad\qquad\qquad\qquad\qquad\qquad\qquad\qquad\times(\nabla u_{\varepsilon},\nabla u_{\varepsilon,i})(\nabla u_{\varepsilon,j},D^2u_\varepsilon\nabla u_\varepsilon)\,dx\Big|
         \\&\le q C(k, p)  \int_{B_{2R}} (\varepsilon+|\nabla u_\varepsilon|^2)^{kq-1} |u_{\varepsilon,ij}|^{2q-1}\psi ^2 |D^2u_\varepsilon|^3\,dx,
       \end{split}
\end{align}
where $C(p,k)$ is a positive constant.

For the  terms $|I_6|+|I_9|+|I_{15}|+|I_{18}|$, we have the following estimate
\begin{align}\label{stima4}
    \begin{split}
       &|I_6|+|I_9|+|I_{15}|+|I_{18}|
       \\& =2 |p-2|\Big| \int_{B_{2R}} (\varepsilon+|\nabla u_\varepsilon|^2)^{kq-1} |u_{\varepsilon,ij}|^{2q-2}u_{\varepsilon,ij}\psi (\nabla u_{\varepsilon,j},\nabla u_{\varepsilon})(\nabla u_{\varepsilon,i},\nabla \psi)\,dx\Big|
       \\&+2 |p-2||p-4|  \Big|\int_{B_{2R}} (\varepsilon+|\nabla u_\varepsilon|^2)^{\frac{2kq-4}{2}} |u_{\varepsilon,ij}|^{2q-2}u_{\varepsilon,ij}\psi \times\\&\qquad\qquad\qquad\qquad\qquad\times(\nabla u_{\varepsilon,j},\nabla u_\varepsilon) (\nabla u_{\varepsilon,i},\nabla u_{\varepsilon})(\nabla u_{\varepsilon},\nabla \psi)\,dx\Big|
       \\& +2 |p-2| \Big|\int_{B_{2R}} (\varepsilon+|\nabla u_\varepsilon|^2)^{kq-1} |u_{\varepsilon,ij}|^{2q-2}u_{\varepsilon,ij}\psi (\nabla u_{\varepsilon,i},\nabla u_{\varepsilon,j})(\nabla u_{\varepsilon},\nabla \psi)\,dx\Big|
       \\&+2 |p-2| \Big|\int_{B_{2R}} (\varepsilon+|\nabla u_\varepsilon|^2)^{kq-1} |u_{\varepsilon,ij}|^{2q-2}u_{\varepsilon,ij}\psi (\nabla u_{\varepsilon,i},\nabla u_{\varepsilon})(\nabla u_{\varepsilon,j},\nabla \psi)\,dx\Big|
       \\ &\le  C(p) \int_{B_{2R}} (\varepsilon+|\nabla u_\varepsilon|^2)^{\frac{2kq-1}{2}} |u_{\varepsilon,ij}|^{2q-1}|D^2u_\varepsilon|^2 |\nabla \psi| |\psi|\,dx,
    \end{split}
\end{align}

where $C(p)$ is a positive constant.

For the last term $|I_{19}|$, obviously, we have 
\begin{equation}\label{terminef_ij}
   |I_{19}|\le   \int_{B_{2R}}|f_{ij}|(\varepsilon+|\nabla u_\varepsilon|^2)^{\frac{2kq+2-p}{2}}|u_{\varepsilon,ij}|^{2q-1}\psi^2\,dx.
\end{equation}

Using \eqref{stima1}, \eqref{stima2}, \eqref{stima3}, \eqref{stima4} and \eqref{terminef_ij} in the estimate \eqref{primastima} we obtain

\begin{align}\label{stimasulsecondolinearizzato}
    \begin{split}
    &(2q-1)\min\{1,p-1\}\int_{B_{2R}} (\varepsilon+|\nabla u_\varepsilon|^2)^{kq} |u_{\varepsilon,ij}|^{2q-2}\psi ^2|\nabla u_{\varepsilon,ij}|^2\,dx
       \\&\le  qC(p,k) \int_{B_{2R}} (\varepsilon+|\nabla u_\varepsilon|^2)^{\frac{2kq-1}{2}} |u_{\varepsilon,ij}|^{2q-2}\psi ^2|\nabla u_{\varepsilon,ij}||D^2u_\varepsilon|^2\,dx
       \\&+  C(p) \int_{B_{2R}} (\varepsilon+|\nabla u_\varepsilon|^2)^{kq}|u_{\varepsilon,ij}|^{2q-1}\psi  |\nabla u_{\varepsilon,ij}||\nabla \psi|\,dx
       \\&+ q C( p,k)  \int_{B_{2R}} (\varepsilon+|\nabla u_\varepsilon|^2)^{kq-1} |u_{\varepsilon,ij}|^{2q-1}\psi ^2 |D^2u_\varepsilon|^3\,dx
        \\&+ C(p) \int_{B_{2R}} (\varepsilon+|\nabla u_\varepsilon|^2)^{\frac{2kq-1}{2}} |u_{\varepsilon,ij}|^{2q-1}|D^2u_\varepsilon|^2 |\nabla \psi| \psi\,dx
        \\&+ \int_{B_{2R}}|f_{ij}|(\varepsilon+|\nabla u_\varepsilon|^2)^{\frac{2kq+2-p}{2}}|u_{\varepsilon,ij}|^{2q-1}\psi^2\,dx
        \\&:=\mathcal{J}_1+\mathcal{J}_2+\mathcal{J}_3+\mathcal{J}_4+\mathcal{J}_5.
    \end{split}
\end{align}

Now we estimate the right-hand side of \eqref{stimasulsecondolinearizzato}.
Using weighted Young's inequality we get 
\begin{align}\label{j1}
    \begin{split}
        \mathcal{J}_1\le &q \theta   \int_{B_{2R}} (\varepsilon+|\nabla u_\varepsilon|^2)^{kq} |u_{\varepsilon,ij}|^{2q-2}\psi ^2|\nabla u_{\varepsilon,ij}|^2\,dx 
        \\& + qC(p,k,\theta) \int_{B_{2R}} (\varepsilon+|\nabla u_\varepsilon|^2)^{kq-1} | u_{\varepsilon,ij}|^{2q-2}|D^2u_\varepsilon|^4\psi^2\,dx ,
    \end{split}
\end{align}

where $C(p,k,\theta)$ is a positive constant.

In the same way we estimate

\begin{align}\label{j2}
    \begin{split}
        \mathcal{J}_2\le &\theta \int_{B_{2R}} (\varepsilon+|\nabla u_\varepsilon|^2)^{kq} |u_{\varepsilon,ij}|^{2q-2}\psi ^2|\nabla u_{\varepsilon,ij}|^2\,dx 
        \\& + C(p,\theta) \int_{B_{2R}} (\varepsilon+|\nabla u_\varepsilon|^2)^{kq} | u_{\varepsilon,ij}|^{2q}|\nabla \psi|^2\,dx ,
    \end{split}
\end{align}
  where $C(p,\theta)$ is a positive constant.

  Obviously, we estimate $\mathcal{J}_3$ in the following way 
  \begin{equation}\label{j3}
      \mathcal{J}_3 \le  qC(p,k) \int_{B_{2R}} (\varepsilon+|\nabla u_\varepsilon|^2)^{kq-1} |u_{\varepsilon,ij}|^{2q-2}|D^2u_\varepsilon|^4\psi^2\,dx.
  \end{equation}

	For the last term $\mathcal{J}_4$, by standard Young's inequality we have 

 \begin{align}\label{j4}
    \begin{split}
        \mathcal{J}_4&\le  C(p)\int_{B_{2R}} (\varepsilon+|\nabla u_\varepsilon|^2)^{kq-1} | u_{\varepsilon,ij}|^{2q-2}|D^2u_\varepsilon|^4 \psi^2\,dx  
        \\& + C(p) \int_{B_{2R}} (\varepsilon+|\nabla u_\varepsilon|^2)^{kq} |u_{\varepsilon,ij}|^{2q}|\nabla \psi|^2\,dx ,
	\end{split}
 \end{align}

where $C(p)$ is a positive constant.

Using \eqref{j1}, \eqref{j2}, \eqref{j3} and \eqref{j4} in \eqref{stimasulsecondolinearizzato} we obtain 

\begin{align}\label{penultimasulsecondolinearizzato}
    \begin{split}
        &((2q-1)\min\{1,p-1\}-(q+1)\theta)\int_{B_{2R}} (\varepsilon+|\nabla u_\varepsilon|^2)^{kq} |u_{\varepsilon,ij}|^{2q-2}\psi ^2|\nabla u_{\varepsilon,ij}|^2\,dx
       \\&\le   qC(p,k,\theta)\int_{B_{2R}} (\varepsilon+|\nabla u_\varepsilon|^2)^{kq-1} | u_{\varepsilon,ij}|^{2q-2}|D^2u_\varepsilon|^4 \psi^2\,dx 
        \\& + C(p,\theta) \int_{B_{2R}} (\varepsilon+|\nabla u_\varepsilon|^2)^{kq} | u_{\varepsilon,ij}|^{2q}|\nabla \psi|^2\,dx \\&+ \int_{B_{2R}}|f_{ij}|(\varepsilon+|\nabla u_\varepsilon|^2)^{\frac{2kq+2-p}{2}}|u_{\varepsilon,ij}|^{2q-1}\psi^2\,dx.
    \end{split}
\end{align}
Therefore, by choosing $\theta:=\theta(p)$ sufficiently small, we infer that
\begin{equation*}
    \begin{split}
        &q^2\int_{B_{2R}} (\varepsilon+|\nabla u_\varepsilon|^2)^{kq} |u_{\varepsilon,ij}|^{2q-2}\psi ^2|\nabla u_{\varepsilon,ij}|^2\,dx
       \\&\le   q^2C(p,k)\int_{B_{2R}} (\varepsilon+|\nabla u_\varepsilon|^2)^{kq-1} | u_{\varepsilon,ij}|^{2q-2}|D^2u_\varepsilon|^4 \psi^2\,dx 
        \\& + qC(p) \int_{B_{2R}} (\varepsilon+|\nabla u_\varepsilon|^2)^{kq} | u_{\varepsilon,ij}|^{2q}|\nabla \psi|^2\,dx \\&+ qC(p)\int_{B_{2R}}|f_{ij}|(\varepsilon+|\nabla u_\varepsilon|^2)^{\frac{2kq+2-p}{2}}|u_{\varepsilon,ij}|^{2q-1}\psi^2\,dx.
    \end{split}
\end{equation*}
Concerning the second summand in the r.h.s. of \eqref{gradio} we note that it can be trivially estimated by the first term in the r.h.s. of the above inequality. This proves \eqref{stimalinearizzatosecondo} and concludes the proof.
\end{proof}

In the following proposition, in the spirit of \cite{DEG,MO,serrin}, we prove that, when $n\geq 3$, we may estimate the $L^{2^{*}q}$-norm of $g_{\varepsilon}$ defined in \eqref{g}, on a ball of radius $h'$ in terms of some power of the $L^{\nu(q-\frac1k)}$-norm, with $\nu<2^*$, on a larger ball $B_{h}$. This is the starting point for a Moser's iteration argument, which will be performed in Prop. \ref{prop:moser}.
\begin{prop}\label{dinodino}
   Let $\Omega$ be a domain in $\R^n$, $n\ge 3$, and $B_{2R}\subset\subset \Omega$. Let $u_\varepsilon$ be a weak solution of the regularized problem \eqref{eq:problregol}. Assume $f\in C^2(\Omega)$.
   For $0< k\leq  1$ fixed, there exists  $\mathfrak{C}:=\mathfrak{C}(k,l,n)>0$  such that if $|p-2|<\mathfrak{C}$ then the following holds true. For any  $q\ge \frac 1k$ and any $\nu$ such that $2l/(l-1)<\nu<2^{*}$, there exists   $\mathcal{C}>0$ depending on $k,n,l$, $R,p$, $\| \nabla u\|_{L^{\infty}(B_{2R})}$ and $\|f\|_{W^{2,l}(B_{2R})}$ such that
\begin{align}\label{cane477}
    \begin{split}
    \|g_{\varepsilon}\|_{L^{2^*q}(B_{h'})}^q\leq \mathcal{C}\frac{q}{h-h'}\|g_{\varepsilon}\|_{L^{\nu(q-\hat{s})}(B_h)}^{q-\hat{s}}, \quad \hat{s}:=\frac1k,
    \end{split}
\end{align}
for any $0<h'<h<R$  and where $g_{\varepsilon}$ is defined in \eqref{g}.

\end{prop}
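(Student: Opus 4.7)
The strategy is to combine Lemma~\ref{Lemmalinearizzatodue} with the Sobolev embedding and the Calder\'on--Zygmund bound of Remark~\ref{regolarizziamo}. Let $\psi\in C_c^{\infty}(B_h)$ be a standard cutoff with $\psi\equiv 1$ on $B_{h'}$ and $|\nabla\psi|\le 2/(h-h')$. Since $g_\varepsilon^{q}\psi\in W_0^{1,2}(B_{2R})$, Sobolev's inequality gives
\[
\|g_\varepsilon\|_{L^{2^{*}q}(B_{h'})}^{2q}\le \|g_\varepsilon^{q}\psi\|_{L^{2^{*}}(B_{2R})}^{2}\le C(n)\,\|\nabla(g_\varepsilon^{q}\psi)\|_{L^{2}(B_{2R})}^{2},
\]
and, after using $|\nabla(g_\varepsilon^{q}\psi)|^{2}\le 2|\psi\,\nabla g_\varepsilon^{q}|^{2}+2\,g_\varepsilon^{2q}|\nabla\psi|^{2}$ and Lemma~\ref{Lemmalinearizzatodue}, the right-hand side is dominated by the three integrals on the right of \eqref{stimalinearizzatosecondo} together with an additional term $\int g_\varepsilon^{2q}|\nabla\psi|^{2}\,dx$.

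I would then rewrite each of those integrands by exploiting the algebraic identity $k\hat{s}=1$. Setting $A:=\varepsilon+|\nabla u_\varepsilon|^{2}$, which is locally uniformly bounded in view of the $C^{1,\beta}$ estimate for $u_\varepsilon$, one has $A^{kq-1}=A^{k(q-\hat{s})}$ and $g_\varepsilon^{2(q-\hat{s})}=A^{k(q-\hat{s})}|u_{\varepsilon,ij}|^{2(q-\hat{s})}$. Using $|u_{\varepsilon,ij}|\le|D^{2}u_\varepsilon|$ and $\hat{s}\ge 1$ (since $k\le 1$), the first integrand in \eqref{stimalinearizzatosecondo} is majorised by $g_\varepsilon^{2(q-\hat{s})}|D^{2}u_\varepsilon|^{2(\hat{s}+1)}$; the $|\nabla\psi|^{2}$-integrands become $g_\varepsilon^{2(q-\hat{s})}|D^{2}u_\varepsilon|^{2/k}|\nabla\psi|^{2}$; and the $f$-integrand becomes $|f_{ij}|\,g_\varepsilon^{2(q-\hat{s})}|D^{2}u_\varepsilon|^{(2-k)/k}$, after extracting $g_\varepsilon^{2\hat{s}-1}=g_\varepsilon^{(2-k)/k}$ together with the residual factor $A^{(k+2-p)/2}$, which is non-negative provided $\mathfrak{C}\le k$.

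The third step is a H\"older argument engineered so that the resulting $L^{r}$-norms of $|D^{2}u_\varepsilon|$ are exactly the two entries of $\hat{q}$ in \eqref{cappuccio1}. On the first two integrands, H\"older with exponents $(\nu/2,\nu/(\nu-2))$ isolates $\|g_\varepsilon\|_{L^{\nu(q-\hat{s})}(B_h)}^{2(q-\hat{s})}$ and an $L^{r_{1}}$-norm of $|D^{2}u_\varepsilon|$ with $r_{1}=(1/k+1)\,2\nu/(\nu-2)$. On the $f$-integrand, H\"older with three exponents $\bigl(l,\nu/2,l\nu/(l(\nu-2)-\nu)\bigr)$, whose reciprocals sum to $1$ because $\nu>2l/(l-1)$, produces $\|f_{ij}\|_{L^l}$, the same factor $\|g_\varepsilon\|_{L^{\nu(q-\hat{s})}(B_h)}^{2(q-\hat{s})}$, and an $L^{r_{2}}$-norm of $|D^{2}u_\varepsilon|$ with $r_{2}=((2-k)/k)\,l\nu/(l(\nu-2)-\nu)$. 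Since $r_{1},r_{2}\le\hat{q}$ and $\mathfrak{C}$ is chosen so small that $p$ lies in the Calder\'on--Zygmund window of Theorem~\ref{teoremacalderon} for $q=\hat{q}$, Remark~\ref{regolarizziamo} supplies a uniform-in-$\varepsilon$ bound on $\|D^{2}u_\varepsilon\|_{L^{\hat{q}}}$. Collecting all factors and using $q\ge\hat{s}$ to absorb the linear-in-$q$ contributions into $q^{2}/(h-h')^{2}$, with a constant depending on $R$, yields
\[
\|g_\varepsilon^{q}\psi\|_{L^{2^{*}}}^{2}\le\mathcal{C}\,\frac{q^{2}}{(h-h')^{2}}\,\|g_\varepsilon\|_{L^{\nu(q-\hat{s})}(B_h)}^{2(q-\hat{s})},
\]
from which \eqref{cane477} follows by taking square roots.

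The main obstacle is the simultaneous compatibility of $r_{1}$ and $r_{2}$ with a single $\hat{q}$ inside the Calder\'on--Zygmund window of Theorem~\ref{teoremacalderon}; this is the raison d'\^etre of the definition of $\hat{q}$ in \eqref{cappuccio1} and of the smallness assumption $|p-2|<\mathfrak{C}(k,l,n)$, because the window $(2-1/C(n,\hat{q}),\,2+1/C(n,\hat{q}))$ shrinks as $\hat{q}$ grows. Secondary bookkeeping obstacles are the constraint $k\le 1$, needed to dominate $|u_{\varepsilon,ij}|^{2\hat{s}-2}$ by $|D^{2}u_\varepsilon|^{2\hat{s}-2}$, and the constraint $p<2+k$, needed to keep the residual power of $A$ in the $f$-term non-negative; both are absorbed into $\mathfrak{C}$.
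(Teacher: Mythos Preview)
Your proposal is correct and follows essentially the same route as the paper: apply Lemma~\ref{Lemmalinearizzatodue}, use the Sobolev embedding to pass to the $L^{2^*}$ norm of $g_\varepsilon^q$, then split each integrand by H\"older so as to isolate $\|g_\varepsilon\|_{L^{\nu(q-\hat s)}}^{q-\hat s}$ and bound the remaining $|D^2u_\varepsilon|$ factors via Remark~\ref{regolarizziamo} with the exponent $\hat q$ of \eqref{cappuccio1}. The only cosmetic differences are that the paper adds $\|g_\varepsilon^q\|_{L^2}$ and uses the full $W^{1,2}$ embedding rather than the $W^{1,2}_0$ embedding on $g_\varepsilon^q\psi$, and handles the $f$-integrand by two successive H\"older inequalities (first $(l,l/(l-1))$, then $\bigl(\tfrac{\nu(l-1)}{2l},\tfrac{\nu(l-1)}{l(\nu-2)-\nu}\bigr)$) instead of your single triple H\"older $\bigl(l,\tfrac{\nu}{2},\tfrac{l\nu}{l(\nu-2)-\nu}\bigr)$; these yield the same exponents and the same $\hat q$.
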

\begin{proof}
Let $h$ and $h'$ be real numbers satisfying $h'<h\le R$. Let $\psi(x)$ be the standard cut-off function such that $\psi(x)=1$ in the ball $B_{h'}$, $\psi(x) =0$ in the complement of the ball $B_h$, and $|\nabla \psi|\le 2/(h-h')$. 
Recall the notation \eqref{g}, by taking the square root of the inequality \eqref{stimalinearizzatosecondo}, we obtain

\begin{align}\label{zeus}
    \begin{split}
    \|\nabla g_{\varepsilon}^{q}\|_{L^{2}(B_{h'})}
       \le  & C(p,k)\Bigg(q\left(\int_{B_{h}} (\varepsilon+|\nabla u_\varepsilon|^2)^{kq-1} | u_{\varepsilon,ij}|^{2q-2}|D^2u_\varepsilon|^4\psi^2\,dx \right)^{\frac{1}{2}}
       \\&+\sqrt q \left( \int_{B_{h}}|f_{ij}|(\varepsilon+|\nabla u_\varepsilon|^2)^{\frac{2kq+2-p}{2}}|u_{\varepsilon,ij}|^{2q-1}\psi^2\,dx\right)^{\frac 12}
        \\
         &+\sqrt q\|g_{\varepsilon}^q \nabla\psi\|_{L^2(B_h)}\Bigg).
    \end{split}
\end{align}
Summing the same quantity on both sides of the previous inequality \eqref{zeus}, we obtain
\begin{align}\label{zeusmorto}
    \begin{split}
    \|g_{\varepsilon}^{q}\|_{W^{1,2}(B_{h'})}\le    & C(p,k)\Bigg(q\left(\int_{B_{h}} (\varepsilon+|\nabla u_\varepsilon|^2)^{kq-1} | u_{\varepsilon,ij}|^{2q-2}|D^2u_\varepsilon|^4\psi^2\,dx \right)^{\frac{1}{2}}
       \\&+\sqrt q\left( \int_{B_{h}}|f_{ij}|(\varepsilon+|\nabla u_\varepsilon|^2)^{\frac{2kq+2-p}{2}}|u_{\varepsilon,ij}|^{2q-1}\psi^2\,dx\right)^{\frac 12}
        \\&+\sqrt q\|g_{\varepsilon}^q \nabla\psi\|_{L^2(B_h)}\Bigg)+\|g_{\varepsilon}^{q}\|_{L^{2}(B_{h'})}.\\
\end{split}
\end{align}
By using  Sobolev's inequality 
\begin{align}\label{cane}
    \begin{split}
\|g_{\varepsilon}^{q}\|_{L^{2^* }(B_{h'})}\le
    & C_SC(p,k)\Big[q\left(\int_{B_{h}} (\varepsilon+|\nabla u_\varepsilon|^2)^{kq-1} | u_{\varepsilon,ij}|^{2q-2}|D^2u_\varepsilon|^4\psi^2\,dx \right)^{\frac{1}{2}}
    \\&+\sqrt q\left( \int_{B_{h}}|f_{ij}|(\varepsilon+|\nabla u_\varepsilon|^2)^{\frac{2kq+2-p}{2}}|u_{\varepsilon,ij}|^{2q-1}\psi^2\,dx\right)^{\frac 12}
        \\&
    +\frac{\sqrt q}{h-h'}\|g_{\varepsilon}^q \|_{L^2(B_h)}\Big]
         :=\mathcal{I}_1+\mathcal{I}_2+\mathcal{I}_3,
    \end{split}
\end{align}
where $C_S$ denotes the Sobolev's constant and where we have used the inequalities  $h'\leq h$ and $|\nabla \psi|\le 2/(h-h')$.

We estimate $\mathcal{I}_1$. Fix $\hat s:=1/k\ge 1$, independent of $q$, to be chosen later, and $\frac{2l}{l-1}<\nu<2^{*}$. Note that  $\frac{2l}{l-1}<2^{*}$ since by hypothesis we have $l>\frac{n}{2}$.  By using Holder's inequality with exponents $(\frac{\nu}{2},\frac{\nu}{\nu-2})$ we obtain
\begin{align}\label{cne44}
\begin{split}
    \mathcal{I}_1 \le 
        & q C_SC(p,k) \left(\int_{B_{h}}  | u_{\varepsilon,ij}|^{(2\hat s-2)\frac{\nu}{\nu-2}}|D^2u_\varepsilon|^{4\frac{\nu}{\nu-2}}\,dx\right)^{\frac {\nu-2}{2\nu}}\times 
        \\& \times \left(\int_{B_{h}} (\varepsilon+|\nabla u_\varepsilon|^2)^{(kq-1)\frac{\nu}{2}} | u_{\varepsilon,ij}|^{\nu(q-\hat s)} \psi^{\nu}\,dx\right)^{\frac 1\nu}.
    \end{split}
\end{align}
By Theorem \ref{teoremacalderon}, see Remark \ref{regolarizziamo}, there exists a positive constant $\mathcal{C}$ independent of $\varepsilon$ and depending on $n,\hat s,\nu,R,p,\| \nabla u\|_{L^{\infty}(B_{2R})},\|f\|_{C^{0,\beta'}(B_{2R})},$ and $\|f\|_{W^{1,1}( B_{2R})}$  such that 

\begin{equation}\label{canestro}
   \left(\int_{B_{h}}  | u_{\varepsilon,ij}|^{(2\hat s-2)\frac{\nu}{\nu-2}}|D^2u_\varepsilon|^{4\frac{\nu}{\nu-2}}\,dx\right)^{\frac {\nu}{2(\nu-2)}}\le  \left(\int_{B_{h}}  |D^2 u_\varepsilon|^{\frac{2\nu}{\nu-2}(\hat s+1)}\,dx\right)^{\frac {\nu-2}{2\nu}}\le \mathcal{C}.
\end{equation}
By \eqref{canestro} and the inequality \eqref{cne44} we infer that 
\begin{equation}\label{cne43}
    \mathcal{I}_1\le\hat{ \mathcal{C}} q \left(\int_{B_{h}} (\varepsilon+|\nabla u_\varepsilon|^2)^{kq-1\frac{\nu}{2}} | u_{\varepsilon,ij}|^{\nu(q-\hat s)} \psi^\nu\,dx\right)^{\frac 1\nu}
\end{equation}
where $\hat{ \mathcal{C}}=\hat{\mathcal{C}}(k,n,\nu,R,p,\| \nabla u\|_{L^{\infty}(B_{2R})},\|f\|_{C^{0,\beta'}( B_{2R})},\|f\|_{W^{1,1}( B_{2R})})$ is a positive constant independent of $\varepsilon$ and $q$.

We estimate the term $\mathcal{I}_2$. To begin with, we notice that, by  \cite[Theorem 1.7]{L2} (see also \cite{DB} and \cite[Theorem $2.1$]{Cma2}), we have 
\begin{equation}\label{uniformitàdeigradienti}
    \|\nabla u_\varepsilon\|_{L^{\infty}(B_{2h})}\le C,
\end{equation}
where $C(p,n,R,\|f\|_{L^s(B_{2R})})$ is a positive constant not depending on $\varepsilon$ and $s>n$.

Since $p<4$ and since $\nabla u_{\varepsilon}$ is uniformly bounded in $L^{\infty}$ we infer that $\\(\varepsilon+|\nabla u_\varepsilon|^2)^{4-p}$ is uniformly bounded in $L^{\infty}$.
From the latter fact, by \eqref{uniformitàdeigradienti} and by using Holder's inequality with exponents $(l,l/(l-1))$ we obtain
\begin{equation}
\begin{split}
  \mathcal{I}_2\le &\sqrt qC_SC(p,k,R,\|f\|_{L^s(B_{2R})})
  \left( \int_{B_{h}}|f_{ij}|^l\,dx\right)^{\frac {1}{2l}}\times
  \\&\times\left( \int_{B_{h}}(\varepsilon+|\nabla u_\varepsilon|^2)^{(kq-1)\frac{l}{l-1}}|u_{\varepsilon,ij}|^{2(q-\hat s)\frac{l}{l-1}}|u_{\varepsilon,ij}|^{(2\hat s-1)\frac{l}{l-1}}\psi^{\frac{2l}{l-1}}\,dx\right)^{\frac {l-1}{2l}}.
    \end{split}
\end{equation}
Since $f\in W^{2,l}(B_{2R})$, by using Holder's inequality with exponents $\left(\frac{\nu(l-1)}{2l},\frac{\nu(l-1)}{l(\nu-2)-\nu}\right)$, we have 
\begin{equation}\label{prestimaI_2}
\begin{split}
  \mathcal{I}_2\le &\sqrt qC
  \left( \int_{B_{h}}|u_{\varepsilon,ij}|^{(2\hat s-1)\frac{l\nu}{l(\nu-2)-\nu}}\,dx\right)^{\frac {l(\nu-2)-\nu}{2l\nu}}\times
  \\&\times\left( \int_{B_{h}}(\varepsilon+|\nabla u_\varepsilon|^2)^{(kq-1)\frac{\nu}{2}}|u_{\varepsilon,ij}|^{(q-\hat s)\nu}\psi^{\nu}\,dx\right)^{\frac {1}{\nu}},
    \end{split}
\end{equation}
with $C=C(p,k,n,R,\|f\|_{L^s(B_{2R})},\|f\|_{W^{2,l}(B_{2R})})$.

By \eqref{prestimaI_2} and by Theorem \ref{teoremacalderon}, see Remark \ref{regolarizziamo},  we deduce that 
\begin{equation}\label{stimaI_2}
    \mathcal{I}_2\le\dot{ \mathcal{C}} \sqrt q \left(\int_{B_{h}} (\varepsilon+|\nabla u_\varepsilon|^2)^{(kq-1)\frac{\nu}{2}} | u_{\varepsilon,ij}|^{\nu(q-\hat s)} \psi^\nu\,dx\right)^{\frac 1\nu},
\end{equation}
where $\dot{ \mathcal{C}}=\dot{\mathcal{C}}(k,n,\nu,R,p,\| \nabla u\|_{L^{\infty}(B_{2R})},\|f\|_{W^{2,l}( B_{2R})})$ is a positive constant independent of $\varepsilon$ and $q$.

In order to estimate  the term $\mathcal{I}_3$, we can reason in a similar way as done for the estimate on $\mathcal{I}_1$. Exploiting the uniform boundedness in $\varepsilon$ of the $L^{\infty}$-norm of the gradient of $u_{\varepsilon}$ and  using Holder's inequality with exponents $(\frac{\nu}{2},\frac{\nu}{\nu-2})$, we obtain
\begin{align}\label{prestimaI_4}
\begin{split}
    \mathcal{I}_3 \le 
        & \frac{\sqrt q}{h-h'} C_SC(p,k,n,R,\|f\|_{L^s(B_{2R})}) \left(\int_{B_{h}}  | u_{\varepsilon,ij}|^{2\hat s\frac{\nu}{\nu-2}}\,dx\right)^{\frac {\nu-2}{2\nu}}\times 
        \\& \times \left(\int_{B_{h}} (\varepsilon+|\nabla u_\varepsilon|^2)^{(kq-1)\frac{\nu}{2}} | u_{\varepsilon,ij}|^{\nu(q-\hat s)} \psi^{\nu}\,dx\right)^{\frac 1\nu}.
    \end{split}
\end{align}
By Theorem \ref{teoremacalderon} (see also Remark \ref{regolarizziamo}) we get
\begin{align}\label{cne444}
\begin{split}
    \mathcal{I}_3 \le 
        & \frac{\sqrt q}{h-h'} \tilde{ \mathcal{C}}  \left(\int_{B_{h}} (\varepsilon+|\nabla u_\varepsilon|^2)^{(kq-1)\frac{\nu}{2}} | u_{\varepsilon,ij}|^{\nu(q-\hat s)} \psi^\nu\,dx\right)^{\frac 1\nu}\\
    \end{split}
\end{align}
where $\mathcal{\tilde C}=\tilde{\mathcal{C}}(k,n,\nu,R,p,\| \nabla u\|_{L^{\infty}(B_{2R})},\|f\|_{C^{0,\beta'}( B_{2R})})$ is again a positive constant independent of $q$ and $\varepsilon$.
We remark that in the estimates \eqref{canestro}, \eqref{prestimaI_2} and \eqref{prestimaI_4}   we have applied Theorem \ref{teoremacalderon} with $D^2u_\varepsilon\in L^{\hat q}(B_{2R})$ where 
\begin{equation}\label{cappuccio}
\hat{q}=\max\left\{(\hat s+1)\frac{2\nu}{\nu-2},(2\hat s-1)\frac{l\nu}{l(\nu-2)-\nu}\right\}.
\end{equation}
Collecting \eqref{cane}, \eqref{cne43}, \eqref{stimaI_2},  and \eqref{cne444}, using that $0\leq\psi\leq 1$  we infer that
\begin{align*}
    \begin{split}
    \|g_{\varepsilon}^q\|_{L^{2^*}(B_{h'})}\leq
    \mathcal{C}\frac{q}{h-h'}   \left(\int_{B_{h}} (\varepsilon+|\nabla u_\varepsilon|^2)^{(kq-1)\frac{\nu}{2}} | u_{\varepsilon,ij}|^{\nu(q-\hat s)} dx\right)^{\frac 1\nu},
    \end{split}
\end{align*}
where $\mathcal{C}=\mathcal{C}(k,n,\nu,l,R,p,\| \nabla u\|_{L^{\infty}(B_{2R})},\|f\|_{W^{2,l}( B_{2R})})$ is independent of $q$ and $\varepsilon$. We conclude by noticing that $\nu(kq-1)=\nu k(q-\hat s)$.\end{proof}
\begin{rem}\label{pluto}
    The case $n=2$ is obtained in a very similar way. The only difference is in the use of the Sobolev inequality to estimate from below the left hand side of \eqref{zeusmorto}. In this framework we can use the embedding $W^{1,2}_{loc}(\Omega)\hookrightarrow L^{r}_{loc}(\Omega)$ with $r>\frac{2l}{l-1}$. Then one has to continue the proof by choosing $2<\frac{2l}{l-1}<\nu<r$, and eventually obtain the \eqref{cane477} with $2^*\rightsquigarrow r$.
\end{rem}

Owing to Prop. \ref{dinodino} and Remark \ref{pluto}, we can perform a Moser's iteration argument and obtain that $(\varepsilon+|\nabla u_\varepsilon|^2)^{\frac{k}{2}}  u_{\varepsilon,ij}$ is locally (uniformly in $\varepsilon$) bounded in $L^{\infty}$.
\begin{prop}\label{prop:moser}
Fix $0<R'<R$.
Under the hypotheses of Prop. \ref{dinodino}, for any $k>0$ 
 there exists a constant $\mathcal{C}>0$ depending on $k,n,l,R'$, $R,p$, $\| \nabla u\|_{L^{\infty}(B_{2R})}$ and $\|f\|_{W^{2,l}( B_{2R})}$ such that
\begin{equation}\label{frtm1}
\left\|(\varepsilon+|\nabla u_\varepsilon|^2)^{\frac{k}{2}}  u_{\varepsilon,ij}\right\|_{L^{\infty}(B_{R'})}\le \mathcal{C}. 
\end{equation}
\end{prop}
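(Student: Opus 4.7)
The plan is a standard Moser iteration built on the reverse-Hölder type inequality \eqref{cane477} of Proposition \ref{dinodino}. Starting from a ball $B_R$ and a fixed $R'<R$, I would pick a geometric sequence of radii $h_i\searrow R'$, for instance $h_i:=R'+(R-R')/2^{i}$, so that $h_{i-1}-h_i$ decays like $2^{-i}$. Then I would define exponents by the recursion $q_{i+1}=\hat s+(2^*/\nu)\,q_i$, starting from some $q_1>\hat s=1/k$, and set $p_i:=2^*q_i$. Because $2^*/\nu>1$, the sequence $q_i$ grows geometrically, so $p_i\to\infty$; moreover $\sum_i 1/q_i<\infty$ and $\sum_i (\log q_i)/q_i<\infty$, and $\prod_i (q_i-\hat s)/q_i$ converges to a strictly positive limit since $\sum_i \hat s/q_i<\infty$.

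Applying \eqref{cane477} with $q=q_i$, $h=h_{i-1}$, $h'=h_i$ gives, after taking the $1/q_i$-th power,
\begin{equation*}
\|g_\varepsilon\|_{L^{p_i}(B_{h_i})}\le \Bigl(\tfrac{\mathcal{C}\,q_i}{h_{i-1}-h_i}\Bigr)^{1/q_i}\,\|g_\varepsilon\|_{L^{\nu(q_i-\hat s)}(B_{h_{i-1}})}^{(q_i-\hat s)/q_i}.
\end{equation*}
Since the recursion is designed so that $\nu(q_i-\hat s)=2^*q_{i-1}=p_{i-1}$, the right-hand side refers precisely to the quantity controlled at the previous step. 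Iterating from $i=1$ down and taking logarithms, one obtains
\begin{equation*}
\log\|g_\varepsilon\|_{L^{p_i}(B_{h_i})}\le \sum_{j=1}^{i}\frac{\log\bigl(\mathcal{C}\,q_j/(h_{j-1}-h_j)\bigr)}{q_j}\Bigl(\prod_{m=j+1}^{i}\tfrac{q_m-\hat s}{q_m}\Bigr)+\Bigl(\prod_{m=1}^{i}\tfrac{q_m-\hat s}{q_m}\Bigr)\log\|g_\varepsilon\|_{L^{\nu(q_1-\hat s)}(B_{h_0})}.
\end{equation*}
Both the sum and the product on the right are bounded uniformly in $i$ by the convergence observations above, once we control the base term $\|g_\varepsilon\|_{L^{\nu(q_1-\hat s)}(B_{h_0})}$; this is the role of Remark \ref{regolarizziamo}, which together with the uniform gradient bound of \cite{L2} quoted in the proof of Proposition \ref{dinodino} yields uniform-in-$\varepsilon$ control on any fixed $L^q$-norm of $g_\varepsilon$ on $B_R$.

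Combining the above, one gets $\|g_\varepsilon\|_{L^{p_i}(B_{h_i})}\le \mathcal{C}$ with $\mathcal{C}$ independent of $i$ and of $\varepsilon$. To pass to the limit $i\to\infty$, I would fix any finite exponent $q$, observe that for $i$ large enough $p_i>q$ and $B_{R'}\subset B_{h_i}$, so by Hölder's inequality
\begin{equation*}
\|g_\varepsilon\|_{L^{q}(B_{R'})}\le |B_{R'}|^{1/q-1/p_i}\,\|g_\varepsilon\|_{L^{p_i}(B_{h_i})}\le |B_{R'}|^{1/q-1/p_i}\,\mathcal{C},
\end{equation*}
and then let $q\to\infty$ to conclude that $\|g_\varepsilon\|_{L^\infty(B_{R'})}\le \mathcal{C}$, which is exactly \eqref{frtm1}. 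The case $n=2$ is treated identically using Remark \ref{pluto}, replacing $2^*$ by the exponent $r$ chosen there. The main technical subtleties will be to verify that the factor $(\mathcal{C}q_j/(h_{j-1}-h_j))^{1/q_j}$ yields a summable logarithmic contribution (which follows from the geometric growth of $q_i$ against the geometric decay of $h_{j-1}-h_j$, giving $\log$-terms of size $O(j)$ against weights of size $(\nu/2^*)^j$) and to ensure the base quantity in the iteration is finite uniformly in $\varepsilon$, both of which are standard in Moser's scheme but need the crucial restriction on $|p-2|$ from Remark \ref{rmk:cappuccio} to apply Theorem \ref{teoremacalderon} at the exponent $\hat q$ in \eqref{cappuccio}.
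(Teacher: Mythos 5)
Your proposal is correct and follows essentially the same Moser iteration scheme as the paper, with the same recursion $q_{i+1}=\hat s+(2^*/\nu)q_i$, the same identity $\nu(q_i-\hat s)=2^*q_{i-1}$, the same geometric decay of the radii, and the same summability argument for the accumulated constants. The one small refinement you miss is that the paper starts the recursion at $q_0=\hat s$ exactly, which makes the base exponent $\nu(q_0-\hat s)=0$ so the base term is trivially $|B_{2R'}|$; your choice of a generic $q_1>\hat s$ forces you to invoke Remark \ref{regolarizziamo} to control a nontrivial $L^{\nu(q_1-\hat s)}$-norm of $g_\varepsilon$, which works but is slightly less clean and requires noting that $\nu(q_1-\hat s)$ must lie within the range covered by $\hat q$ in \eqref{cappuccio} rather than being ``any fixed $L^q$-norm''.
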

\begin{proof}
Consider first the case $0<k<1$. Recall the function $g_{\varepsilon}$ defined in \eqref{g}, and the inequality \eqref{cane477}.
We use a Moser's type  iteration. For $i\in \N_0$ and $\hat{s}=1/k$, we set 
\begin{equation} \label{definizionediq_n}
			\begin{cases}
				q_0 =  \hat{s} \\
				q_{i+1} =\frac{2^*}{\nu}q_i+\hat s.\\
			\end{cases}
		\end{equation}
We note that with this definition we have for any $i\geq 0$
\begin{equation}\label{upperqi}
q_i=\hat{s}\sum_{j=0}^i\left(\frac{2^*}{\nu}\right)^j\leq \hat{s}(i+1)\left(\frac{2^*}{\nu}\right)^i.
\end{equation}
Using the choice $h=h_i=R'(1+2^{-i})$, $h'=h_{i+1}$ and $q=q_i$, the inequality \eqref{cane477} becomes
\begin{align}\label{cane478}
    \begin{split}
    \|g^{q_i}_{\varepsilon}\|_{L^{2^*}(B_{h_{i+1}})}\leq 2\mathcal{C}2^i{q}_i\|g_{\varepsilon}\|_{L^{\nu(q_i-\hat{s})}(B_{h_i})}^{q_i-\hat{s}}.
    \end{split}
\end{align}
with $\mathcal{C}=\mathcal{C}(k,n,l,R',R,p,\| \nabla u\|_{L^{\infty}(B_{2R})}, \|f\|_{W^{2,l}( B_{2R})})$ positive constant.
Set $p_{i+1}=2^*q_i$,  by \eqref{definizionediq_n} we infer that $p_i=\nu(q_i-\hat s)$, so that 

\begin{align}\label{canestro478}
    \begin{split}
    &\|g_{\varepsilon}\|_{L^{p_{i+1}}(B_{h_{i+1}})}
   \le   \left(2\mathcal{ C} 2^i q_i\right)^{\frac{2^*}{p_{i+1}}}
   \|g_{\varepsilon}\|_{L^{p_i}(B_{h_i})}^{\frac{2^*}{\nu}\frac{p_{i}}{p_{i+1}}}.
    \end{split}
\end{align}
The iteration yields

\begin{align}\label{camastra}
    \begin{split}
    &\left(\int_{B_{h_{i+1}}} (\varepsilon+|\nabla u_\varepsilon|^2)^{\frac{kp_{i+1}}{2}}  |u_{\varepsilon,ij}|^{p_{i+1}}\,dx\right)^{\frac{1}{p_{i+1}}}
   \\&\le  \prod_{k=0}^i [(2\mathcal{ C} 2^{k} q_k)^{\frac{(2^*)^{i-k}}{\nu^{i-k}}}]^{\frac{2^*}{p_{i+1}}} \left(\int_{B_{2R'}}  |g_{\varepsilon}|^{\nu( q_0-\hat s)} \,dx\right)^{\frac {(2^*)^{i+1}}{\nu^{i+1}p_{i+1}}}
   \\&= |B_{2R'}|^{\frac {(2^*)^{i+1}}{\nu^{i+1}p_{i+1}}}\prod_{k=0}^i [(2\mathcal{ C} 2^{k} q_k)^{\frac{(2^*)^{i-k}}{\nu^{i-k}}}]^{\frac{2^*}{p_{i+1}}},
    \end{split}
\end{align}
where in the last step we used that $q_0=\hat{s}$ and by $|B_{2R'}|$ the measure of $B_{2R'}$.
We prove that  the r.h.s. of the above inequality is bounded in $i$. Recalling that $p_{i+1}=2^{*}q_i$ and the trivial inequality $q_i\geq \hat{s}(2^*/\nu)^i$,  we have 
\begin{equation}\label{mimmo}\frac{1}{p_{i+1}}\leq \left(\frac{\nu}{2^*}\right)^i\frac{1}{\hat{s}2^*},\end{equation} so that    $|B_{2R'}|^{\frac {(2^*)^{i+1}}{\nu^{i+1}p_{i+1}}}$
is bounded from above by a constant independent of $i$. We analyze the second factor in the r.h.s. of \eqref{camastra}, i.e. 
\begin{equation}\label{camastra1}
    \prod_{k=0}^i [(2\mathcal{ C} 2^{k} q_k)^{\frac{(2^*)^{i-k}}{\nu^{i-k}}}]^{\frac{2^*}{p_{i+1}}}=\exp\left({\sum_{k=0}^i\left(\frac{2^*}{\nu}\right)^{i-k}\frac{2^*}{p_{i+1}}\log (2\mathcal{C}2^kq_k)}\right).
\end{equation}
Since $2\mathcal{C}2^kq_k\geq 1$, by using \eqref{mimmo} we obtain
\begin{equation*}
    \eqref{camastra1}\leq \exp\left(\sum_{k=0}^i\left(\frac{2^*}{\nu}\right)^{-k}\frac{1}{\hat{s}}\log(2\mathcal{C}2^kq_k)\right).
\end{equation*}
We use \eqref{upperqi} to continue the chain of inequality and infer that there exists $M>1$ such that
\begin{equation*}
    \eqref{camastra1}\leq \exp\left(\sum_{k=0}^i\left(\frac{\nu}{2^*}\right)^k\frac{1}{\hat{s}}k \log(M)\right),
\end{equation*}
which is bounded in $i$ since $\nu/2^*<1$.
Passing to the limit for $i\rightarrow \infty$ in \eqref{camastra}, and noticing that $\|(\varepsilon+|\nabla u_\varepsilon|^2)^{\frac{k}{2}}  u_{\varepsilon,ij}\|_{L^{\infty}(B_{R'})}\leq \lim_{i\rightarrow\infty}\|(\varepsilon+|\nabla u_\varepsilon|^2)^{\frac{k}{2}}  u_{\varepsilon,ij}\|_{L^{p_i}(B_{h_i})}$, we eventually  obtain 
\begin{equation*}
\left\|(\varepsilon+|\nabla u_\varepsilon|^2)^{\frac{k}{2}}  u_{\varepsilon,ij}\right\|_{L^{\infty}(B_{R'})}\le \mathcal{C},    
\end{equation*}
 where $\mathcal C$ is a positive constant not depending on $\varepsilon$. \\
 Now we consider the case $k> 1$. If $k$ is not an integer, then
\[
(\varepsilon + |\nabla u_\varepsilon|)^k |D^2 u_\varepsilon|
= (\varepsilon + |\nabla u_\varepsilon|)^{\lfloor k \rfloor} 
  (\varepsilon + |\nabla u_\varepsilon|)^{k - \lfloor k \rfloor} |D^2 u_\varepsilon|
\]
\[
\leq \| \varepsilon + |\nabla u_\varepsilon| \|_{L^\infty}^{\lfloor k \rfloor}
   \, \| (\varepsilon + |\nabla u_\varepsilon|)^{k - \lfloor k \rfloor} 
   |D^2 u_\varepsilon| \|_{L^\infty}.
\]

In the last line, the first term is bounded (uniformly in $\varepsilon$) thanks to the uniform bound on 
$\|\nabla u_\varepsilon\|_{L^\infty}$ and since $\lfloor k \rfloor \geq 1$, while the second term has just been proven to be bounded, as $k - \lfloor k \rfloor \in (0,1)$.

If $k > 1$ is an integer, then
\[
(\varepsilon + |\nabla u_\varepsilon|)^k |D^2 u_\varepsilon|
= (\varepsilon + |\nabla u_\varepsilon|)^{k-1} \, 
  (\varepsilon + |\nabla u_\varepsilon|) |D^2 u_\varepsilon|,
\]
and the same reasoning applies.
\end{proof}
\begin{rem}\label{pluto2}
    The same thesis of Prop. \ref{prop:moser} holds true in the case $n=2$, it is enough to change $2^*$ with $r$ according to the notation of Remark \ref{pluto}.
\end{rem}
We are in position to prove Theorem \ref{thm:main}.
\begin{proof}[Proof of Theorem \ref{thm:main}]
Let us begin with the case $n=3$. Fix $x_0\in\Omega$ and let $R>0$ such that $B_{2R}(x_0)\subset\subset\Omega $ and let $R'<R$.\\ We prove that $|\nabla u|^k u_{ij}\in L^{\infty}(B_{R'}(x_0))$.
Consider $u_{\varepsilon}$ solution to \eqref{eq:problregol}. By Proposition \ref{prop:moser} it is enough to prove that $(\varepsilon+|\nabla u_{\varepsilon}|^2)^{k/2}u_{\varepsilon,ij}$ converges a.e. to  $|\nabla u|^{k}u_{ij}$.
  Assume for the moment
\begin{equation}\label{reg_f}
    f \in C^2(\Omega). 
\end{equation}
Let us consider a compact set $K\subset \subset B_{R'}$. We recall (see \cite{Anto2,DB,L2}) that, 
\begin{equation}\label{evvai655}
{u}_\varepsilon\rightarrow  u \quad \text{in the norm }\|\cdot\|_{C^{1,\beta}(K)}.
\end{equation}\  
By Schauder estimates (see \cite{GT}) we have that $\|{u}_\varepsilon\|_{C^{2,\beta}(\tilde K)}\leq C$
for any compact set $\tilde K\subset\subset ( B_{R'} \setminus Z_u)$, with $C$  positive constant not depending on $\varepsilon$. So, by the latter inequality, using Ascoli-Arzelà theorem, up to subsequence, it follows that the limit in \eqref{evvai655} holds in $C^2(\tilde{K}).$ So, by \eqref{frtm1}, for $\varepsilon\rightarrow 0$, we deduce that
\begin{equation}\label{frtm2}
\left||\nabla u|^ku_{ij}|\right|_{L^{\infty}(B_{R'}\setminus Z_u)}\le \mathcal{C}.    
\end{equation}

Moreover by Theorem \ref{teoremacalderon} we have $u_i\in W^{1,\hat q}(B_{2R})$, with $\hat{q}>1$ in \eqref{cappuccio}, and therefore, by using Stampacchia's theorem (see \cite[Theorem $6.19$]{S}) we get that $ \nabla u_i=0$ a.e on the set $\{|\nabla u|=0\}$. From this fact, it follows that
\begin{equation}\label{frtm3}
\left\||\nabla u|^ku_{ij}|\right\|_{L^{\infty}(B_{R'})}\le \mathcal{C},
\end{equation}
with $\mathcal{C}=\mathcal{C}(k,n,l,\nu,R,R',p,\| \nabla u\|_{L^{\infty}(B_{2R})},\|f\|_{W^{2,l}( B_{2R})})$.

 The last step consists of removing the assumption \eqref{reg_f}. Let $f \in W^{2,l}_{loc}(\Omega) $. By standard density argument we can infer that there exists a sequence $\{{f}_m\} \subset C^\infty(\Omega)$ such that
\begin{equation}\label{conv_f}
    {f}_m \rightarrow {f} \quad \text{in } W^{2,l}_{loc}(\Omega).
\end{equation}

To proceed, we consider a sequence $\{{u}_m\}$ of weak solutions to the following system
\begin{equation}
\label{system_step3}
\begin{cases}
-\operatorname{ div}({|\nabla{ u_m}|}^{p-2}\nabla{ u_m})=   { f_m}& \mbox{in $B_{2R}$}\\
 u_m= u  & \mbox{on  $\partial B_{2R}$}.
\end{cases}
\end{equation}

\noindent By \eqref{frtm3} applied to $u_m$ we have

\begin{equation}\label{stimaimportante}
\left\|\nabla u_m|^ku_{m,ij}\right\|_{L^{\infty}(B_{R'})}\le \mathcal{C}, 
\end{equation}

\noindent with $\mathcal{C}=\mathcal{C}(k,n,l,\nu,R,R',p,\| \nabla u_m\|_{L^{\infty}(B_{2R})},\|f_m\|_{W^{2,l}( B_{2R})})$.

First, we recall that  (see \cite[Theorem 1.7]{L2}, \cite{DB} and \cite[Theorem $2.1$]{Cma2}), we have 
\begin{equation}\label{uniformitàdeigradientisum}
    \|\nabla u_m\|_{L^{\infty}(B_{2R'})}\le C,
\end{equation}
where $C(p,R,n,\|f_m\|_{L^s(B_{2R})})$ is a positive constant and $s>n$.

Now, consider a compact set $K\subset \subset B_{R'}$. We recall that, 
\begin{equation}\label{evvai6558}
{u}_m\rightarrow  u \quad \text{in the norm }\|\cdot\|_{C^{1,\beta}(K)}.
\end{equation}\ 
By  \cite[Theorem 2.1]{Cma} (see also \cite[Theorem 1.1]{MMS}), we deduce 
\begin{equation}\label{convergenzaW22}
    u_m\rightarrow u \qquad \text{in } W^{2,2}_{loc}(\tilde K),
\end{equation}
for any compact set $\tilde K\subset\subset (B_{R'}\setminus Z_u)$.
By \eqref{conv_f}, \eqref{uniformitàdeigradientisum}, \eqref{evvai6558} and \eqref{convergenzaW22}, passing to the limit in \eqref{stimaimportante} we deduce
\begin{equation}\label{frtm4.0}
\left\||\nabla u|^ku_{ij}\right\|_{L^{\infty}(B_{R'})}\le \mathcal{C}, 
\end{equation}
 with $\mathcal{C}=\mathcal{C}(k,n,l,\nu,R,R',p,\| \nabla u\|_{L^{\infty}(B_{2R})},\|f\|_{W^{2,l}( B_{2R})})$.\\
 The case $n=2$ follows by Remarks \ref{pluto} and \ref{pluto2}, reasoning as above. This concludes the proof.

\end{proof}

\section*{Acknowledgements}
We warmly thank Berardino Sciunzi for suggesting this problem.\\
  The authors are supported by PRIN PNRR P2022YFAJH \emph{Linear and Nonlinear PDEs: New directions and applications.}  Felice Iandoli has been partially supported by \emph{INdAM-GNAMPA Project Stable and unstable phenomena in propagation of Waves in dispersive media} E5324001950001. D. Vuono has been partially supported by \emph{INdAM-GNAMPA Project Regularity and qualitative aspects of nonlinear PDEs via variational and non-variational approaches} E5324001950001.

\textbf{Declarations.} Data sharing is not applicable to this article as no datasets were generated or analyzed during the current study.
Conflicts of interest: The authors have no conflicts of interest to declare.

\end{document}